\newtheorem{theo}{Theorem}[section]
\newtheorem*{theorem}{Theorem}
\newtheorem*{corollary}{Corollary}
\newtheorem{lem}[theo]{Lemma}
\newtheorem{prop}[theo]{Proposition}
\theoremstyle{definition}
\newtheorem{example}[theo]{Example}
\newtheorem{df}[theo]{Definition}
\newtheorem{step}{Step}
\newtheorem*{claim}{Claim}
\newcommand{\R}{\mathbf{R}}
\newcommand{\C}{\mathbf{C}}
\newcommand{\Z}{\mathbf{Z}}
\newcommand{\F}{\mathbf{F}}
\newcommand{\N}{\mathbf{N}}
\newcommand{\Ad}{\operatorname{Ad}}
\newcommand{\id}{\text{\rm id}}
\newcommand{\Inn}{\operatorname{Inn}}
\newcommand{\LL}{\operatorname{L}}
\newcommand{\Haar}{\text{\rm Haar}}
\newcommand{\dpr}{^{\prime\prime}}
\newcommand{\Prob}{\operatorname{Prob}}
\newcommand{\supp}{\operatorname{supp}}
\begin{document}

\title[${\rm II_1}$ factors with an exotic maximal amenable subalgebra]{A class of ${\rm II_1}$ factors with an exotic abelian maximal amenable subalgebra}

\begin{abstract}
We show that for every mixing orthogonal representation $\pi : \Z \to \mathcal O(H_\R)$, the abelian subalgebra $\LL(\Z)$ is maximal amenable in the crossed product ${\rm II}_1$ factor $\Gamma(H_\R)\dpr \rtimes_\pi \Z$ associated with the free Bogoljubov action of the representation $\pi$. This provides uncountably many non-isomorphic $A$-$A$-bimodules which are disjoint from the coarse $A$-$A$-bimodule and of the form $\LL^2(M \ominus A)$ where $A \subset M$ is a maximal amenable masa in a ${\rm II_1}$ factor.
\end{abstract}

\author{Cyril Houdayer }

\address{CNRS-ENS Lyon \\
UMPA UMR 5669 \\
69364 Lyon cedex 7 \\
France}

\email{cyril.houdayer@ens-lyon.fr}

\thanks{Research partially supported by ANR grants AGORA NT09-461407 and NEUMANN}

\subjclass[2010]{46L10; 46L54; 46L55; 22D25}

\keywords{Free Gaussian functor; Maximal amenable subalgebras; Asymptotic orthogonality property; Rajchman measures}

\maketitle

\section{Introduction}

A (separable) finite von Neumann algebra $P$ is {\em amenable} if there exists a norm one projection $E : \mathbf{B}(\LL^2(P)) \to P$. Connes' celebrated result \cite{connes76} shows that all finite amenable von Neumann algebras are hyperfinite \cite{mvn}.

As the amenable von Neumann algebras form a monotone class, any von Neumann algebra has maximal amenable von Neumann subalgebras. Popa exhibited in \cite{popa-amenable} the first concrete examples of maximal amenable von Neumann subalgebras in ${\rm II_1}$ factors by showing that the {\em generator} maximal abelian subalgebra (masa) in free group factors is maximal amenable. In fact, Popa  showed \cite[Lemma 2.1]{popa-amenable} that the generator masa in a free group factor satisfies the {\em asymptotic orthogonality property} (see Definition $\ref{AOP}$). He then used this property to deduce that the generator masa is maximal amenable (see \cite[Corollary 3.3]{popa-amenable}).

Subsequently in \cite[Theorem 6.2]{CFRW}, the {\em radial} masa in free group factors was shown to satisfy Popa's asymptotic orthogonality property. Since the radial masa is moreover singular by \cite[Theorem 7]{radulescu-singular}, it follows maximal amenable by \cite[Corollary 2.3]{CFRW}. The {\em cup} masa in the ${\rm II_1}$ factors associated with a planar algebra subfactor \cite{brothier-these} gives another example of maximal amenable masa.

We provide in this paper new examples of maximal amenable masas in ${\rm II_1}$ factors. Our construction is natural and consists in looking at $\LL(\Z)$ as a masa inside the crossed product $\LL(\F_\infty) \rtimes \mathbf{Z}$ where the action $\Z \curvearrowright \LL(\F_\infty)$ is a free Bogoljubov action obtained via Voiculescu's {\em free Gaussian functor} \cite{voiculescu92}.  Recall from \cite[Chapter 2]{voiculescu92} that to any separable real Hilbert space $H_\R$, one can associate a finite von Neumann algebra $\Gamma(H_\R)\dpr$ which is $\ast$-isomorphic to the free group factor $\LL(\F_{\dim H_\R})$. To any orthogonal representation $\pi : \Z \to \mathcal{O}(H_\R)$  corresponds a trace-preserving action $\sigma^\pi : \Z \curvearrowright \Gamma(H_\R)\dpr$ called the {\em free Bogoljubov action} associated with the orthogonal representation $\pi$. Our main result is the following.

\begin{theorem}
Let $G$ be a countable infinite abelian group and $\pi : G \to \mathcal O(H_\R)$ a faithful mixing orthogonal representation. Denote by $\Gamma(H_\R)\dpr \rtimes_\pi G$ the crossed product ${\rm II_1}$ factor associated with the free Bogoljubov action of $\pi$. Then $\LL(G)$ is maximal amenable in $\Gamma(H_\R)\dpr \rtimes_\pi G$.
\end{theorem}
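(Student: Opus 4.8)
The plan is to follow the two-step strategy of Popa \cite{popa-amenable}. Writing $M := \Gamma(H_\R)\dpr$, $N := M \rtimes_\pi G$ and $A := \LL(G)$, I would reduce the maximal amenability of $A$ to two structural facts about the inclusion $A \subseteq N$: that $A$ is a singular masa, and that $A$ satisfies the asymptotic orthogonality property of Definition \ref{AOP}. Granting both, the conclusion follows from the abstract criterion that a singular masa with the asymptotic orthogonality property is maximal amenable \cite[Corollary 2.3]{CFRW} (see also \cite[Corollary 3.3]{popa-amenable}). All computations would be carried out in the free Fock model of \cite{voiculescu92}: identify $\LL^2(M)$ with the full Fock space $\mathcal F(H_\C) = \C\Omega \oplus \bigoplus_{m\ge 1} H_\C^{\otimes m}$ over $H_\C = H_\R\otimes_\R\C$, and the free Bogoljubov action $\sigma^\pi$ with the diagonal second quantization of $\pi$. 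Then $\LL^2(N) \cong \bigoplus_{g\in G}\mathcal F(H_\C)\,u_g$ and $E_A(m u_g) = \tau(m)u_g$, so that $N\ominus A$ is the closed span of the vectors $m u_g$ with $m\in M$ of strictly positive Fock degree.

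The masa and singularity statements are the softer half and use only the (weak) mixing of the action. Since $\pi$ is mixing, the complexified tensor powers $\pi_\C^{\otimes m}$ are mixing, hence $\sigma^\pi$ is a mixing, in particular ergodic, trace-preserving action of $G$ on $M$ whose fixed-point algebra is $\C 1$. Expanding $x = \sum_g m_g u_g \in A'\cap N$ and commuting with each $u_j$ forces every $m_g$ to be $\sigma^\pi$-invariant, hence scalar; thus $A'\cap N = A$ and $A$ is a masa. For singularity I would use the full strength of mixing: it makes $A\subseteq N$ a mixing masa, in the sense that $\tau(\sigma^\pi_g(m)\, m')\to 0$ as $g\to\infty$ for $m,m'\in M\ominus\C$, and a routine argument then shows $\mathcal N_N(A)\dpr = A$.

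The asymptotic orthogonality property is the technical core. Fixing $x = a u_g$ and $y = b u_h$ with $a,b\in M\ominus\C$, and a bounded representing sequence $\xi = (\xi_n)^\omega$, $\xi_n = \sum_k \eta_{n,k}u_k$, of an element of $(A'\cap N^\omega)\ominus A^\omega$, one must show $\langle x\xi, \xi y\rangle = 0$. The hypothesis $\xi\perp A^\omega$ lets me discard the scalar parts, so asymptotically each $\eta_{n,k}$ has strictly positive Fock degree, while asymptotic $A$-centrality gives $\sum_k\|\sigma^\pi_j(\eta_{n,k}) - \eta_{n,k}\|_2^2\to 0$ for every fixed $j$. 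Using that $\xi$ commutes with $u_g,u_h$ together with traciality, a direct computation collapses the inner product to a single Fourier coefficient,
\[
\langle x\xi,\xi y\rangle = \lim_{n\to\omega}\sum_{k\in G}\big\langle a\,\eta_{n,khg^{-1}},\ \eta_{n,k}\,\sigma^\pi_k(b)\big\rangle ,
\]
so that for $g=h$ the task is to prove $\lim_\omega\sum_k\langle a\,\eta_{n,k},\,\eta_{n,k}\sigma^\pi_k(b)\rangle = 0$. The terms with $k$ large should be controlled by the mixing of $\pi$: the family $\{\sigma^\pi_k(b)\}_{k\in G}$ is asymptotically orthogonal in $\LL^2(M)$, which yields a Bessel-type estimate forcing the tail $\sum_{|k|>R}$ to be uniformly small.

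The remaining, and I expect most delicate, point is the finitely many resonant terms with $k$ in a fixed finite set, where the mixing estimate is unavailable and $\eta_{n,k}$ may be a genuinely nontrivial almost-invariant vector (recall $G$ is amenable, so mixing provides no spectral gap). Here I would feed the asymptotic $\sigma^\pi$-invariance of the $\eta_{n,k}$ back into the inner product: applying $\sigma^\pi_{-j}$ and replacing $\sigma^\pi_{-j}(\eta_{n,k})$ by $\eta_{n,k}$ rewrites each resonant term as $\langle \sigma^\pi_{-j}(a)\,\eta_{n,k},\ \eta_{n,k}\,\sigma^\pi_{k-j}(b)\rangle$ up to a vanishing error, for every fixed $j$; averaging over $j$ in a Følner set of $G$ and invoking the mean ergodic theorem (again a consequence of mixing) pushes the weight of $a$ and $b$ into the scalars, forcing these terms to zero as well. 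This step---reconciling the bounded group translations with the absence of a spectral gap---is where the argument really has to work, and is the main obstacle. Combining the vanishing of both the tail and the resonant terms establishes the asymptotic orthogonality property, and together with the singular masa property and \cite[Corollary 2.3]{CFRW} this yields that $A = \LL(G)$ is maximal amenable in $N$.
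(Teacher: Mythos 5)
Your global strategy agrees with the paper's: reduce to showing that $A=\LL(G)$ is a singular masa (which follows from mixing) and that $A\subset N$ has the asymptotic orthogonality property of Definition \ref{AOP}, then invoke \cite[Corollary 2.3]{CFRW}. The reduction and the masa/singularity half are fine. The gap is in your proof of the asymptotic orthogonality property, and it is a genuine one: both of your key estimates use only the mixing of the action $\sigma^\pi$ and never the free (Fock-space word) structure of $\Gamma(H_\R)\dpr$. That cannot suffice: if you replace the free Gaussian functor by the classical one, the crossed product is hyperfinite, so $\LL(G)$ is a singular masa which is certainly not maximal amenable and the asymptotic orthogonality property must fail there; yet the action is still mixing, the Koopman representation is still built from the tensor powers of $\pi$, and every step you describe (asymptotic orthogonality of the family $\{\sigma_k^\pi(b)\}_k$, F\o lner averaging, mean ergodic theorem) would go through verbatim. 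Concretely, for the ``resonant'' terms the mean ergodic theorem gives weak convergence of $\frac1{|F|}\sum_{j\in F}\sigma_{-j}^\pi(a)$ to $\tau(a)=0$, but the quantity you must control, $\frac1{|F|}\sum_{j}\tau\bigl(\sigma_{k-j}^\pi(b)^*\eta_{n,k}^*\,\sigma_{-j}^\pi(a)\,\eta_{n,k}\bigr)$, is a second-order correlation in which $a$ and $b$ are translated simultaneously; weak convergence of one factor's average does not make it vanish (take $b=a^*$ and $k=0$ to see the translates conspire). For the ``tail'', a Bessel-type bound on $\sum_k|\langle a\eta_{n,k},\eta_{n,k}\sigma_k^\pi(b)\rangle|$ only yields something of order $\|a\|_\infty\|b\|_\infty\sum_k\|\eta_{n,k}\|_2^2$, which is bounded but not small.

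The missing idea is the one that occupies Section 3 of the paper. Let $K$ be the finite-dimensional subspace spanned by the letters of $a$ and $b$, and split each Fock coefficient $(x_n)^h$ according to whether the corresponding word begins in $K$ or ends in $\pi(h)K$ (the spaces $\mathcal X_h$) or does neither (the spaces $\mathcal Y_h$). The heart of the proof (Step \ref{step1}) shows that the $\mathcal X_h$-components of an asymptotically $\LL(G)$-central sequence vanish: mixing of $\pi$ produces $2^k$ group elements whose translates of $K\otimes\mathcal H$ are pairwise $\varepsilon$-orthogonal, and Proposition \ref{projections} then forces $2^k\|P_{\mathcal X_h^i}((x_n)^h)\|_2^2\lesssim\|(x_n)^h\|_2^2$ up to the centrality error --- an almost-orthogonality pigeonhole argument that substitutes for the spectral gap you correctly observe is unavailable. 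This is precisely where freeness enters: the ``first letter'' subspace $K\otimes\mathcal H$ and its translates are almost orthogonal only in the \emph{full} Fock space. Once the $\mathcal X_h$-components are killed, the $\mathcal Y_h$-components are orthogonal to $a(\cdot)$ and $(\cdot)b$ by pure word combinatorics (Step \ref{step2}, via Proposition \ref{wick}), and only the degree-one part is handled by a mixing estimate of the kind you propose (Step \ref{step3}). Your split into ``large $k$'' versus ``small $k$'' over the group does not track the actual difficulty, which is present for every fixed $k$.
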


To prove the theorem, we actually show that $\LL(G) \subset \Gamma(H_\R)\dpr \rtimes_\pi G$ satisfies Popa's asymptotic orthogonality property (see Theorem $\ref{theorem-AOP}$). Observe that when $(\pi, H_\R) = (\lambda_G, \ell^2_\R(G))$ is the left regular representation, we have 
$$\left( \LL(G) \subset \Gamma(\ell^2_\R(G))\dpr \rtimes_{\lambda_G} G \right) \cong \left( \LL(G) \subset \LL(\Z) \ast \LL(G) \right)$$
and so our theorem recovers Popa's original result \cite{popa-amenable}. The interesting feature of our theorem is that we are able to prove maximal amenability for {\em any} mixing orthogonal representation. This, in turn, will allow us to obtain new examples of maximal amenable masas.

Let $A \subset M$ be a (diffuse) masa in a separable ${\rm II_1}$ factor $M$. Write $A = \LL^\infty(Y, \nu)$ where $Y$ is a second countable compact space and $\tau | A$ is given by integration against $\nu$.  Denote by  
$$\Theta : {\rm C}(Y) \otimes {\rm C}(Y) \to \mathbf B(\LL^2(M \ominus A)) : \Theta(a \otimes b) = a \; J b^* J$$
the $\ast$-representation that encodes the $A$-$A$-bimodule structure of $\LL^2(M \ominus A)$. One can then associate to $\Theta$ a unique measure class $[\eta]$ on the Borel subsets of $Y^2$ and a multiplicity function $m : Y^2 \to \{1, \dots, \infty\}$. (We can always assume that $\eta$ is a Borel probability measure on $Y^2$ quasi-invariant under the flip $\sigma : Y^2 \to Y^2: \sigma(x, y) = (y, x)$). The triple $(Y, [\eta], m)$ is a {\em conjugacy} invariant for the masa $A \subset M$ in the following sense (see \cite[Section 3]{neshveyev-stormer}). Let $A \subset M$ and $B \subset N$ be masas in ${\rm II_1}$ factors. Then there exists a unitary $U : \LL^2(M) \to \LL^2(N)$ such that $U A U^* = B$ and $U J_M U^* = J_N$ if and only if there exists a surjective Borel isomorphism $\theta : Y_A \to Y_B$ such that $\theta_\ast [\nu_A] = [\nu_B]$, $(\theta \times \theta)_\ast [\eta_A] = [\eta_B]$ and $m_B \circ (\theta \times \theta) = m_A$ ($\eta_A$-almost everywhere). From now on, since $A$ is diffuse, we will always assume $(Y, \nu) = (\mathbf T, \Haar)$, that is, $A = \LL^\infty(\mathbf T, \Haar)$.

For the three aforementioned examples, generator \cite{popa-amenable}, radial \cite{dykema-mukherjee} and cup \cite{brothier-these} masas, the corresponding $A$-$A$-bimodule $\LL^2(M \ominus A)$ is always isomorphic to an infinite direct sum of coarse $A$-$A$-bimodules. So, in that case, the measure class $[\eta]$ is simply the class of the Haar measure on $\mathbf T^2$ and the multiplicity function $m$ equals $\infty$ $\Haar$-almost everywhere. It is then natural to ask which and how many measure classes $[\eta]$ on $\mathbf T^2$ can be concretely realized as the measure class of a maximal amenable masa $A \subset M$ in a ${\rm II_1}$ factor.

In order to answer this question, first recall that two Borel measures $\mu$ and $\nu$ on a standard Borel space $X$ are {\em singular} if there exists a Borel subset $\mathcal U \subset X$ such that $\mu(\mathcal U) = 0$ and $\nu(X \setminus \mathcal U) = 0$. We say that an inclusion of a masa in a ${\rm II_1}$ factor $A \subset M$ is {\em exotic} if for the disintegration of $\eta$ with respect to the factor map $p : (\mathbf T^2, [\eta]) \to (\mathbf T, [\Haar]) : (z, t) \to t$, that we write $\eta = \int_{\mathbf T} \eta_t {\rm d}  t$, almost every Borel measure $\eta_t$ is atomless and singular with respect to the Haar measure. When $A \subset M$ is exotic, the $A$-$A$-bimodule $\LL^2(M \ominus A)$ is {\em disjoint} from the coarse $A$-$A$-bimodule $\LL^2(A) \otimes \LL^2(A)$, in the sense that nonzero $A$-$A$-sub-bimodules of $\LL^2(M \ominus A)$ are never isomorphic to $A$-$A$-sub-bimodules of $\LL^2(A) \otimes \LL^2(A)$.

A Borel measure $\mu$ on $\mathbf T$ is {\em symmetric} if $\mu(\mathcal U) = \mu(\overline{\mathcal U})$ for all Borel subset $\mathcal U \subset \mathbf T$. To any symmetric Borel probability measure $\mu$ on $\mathbf T$, one can associate a real Hilbert space
$$H_\R^\mu = \left\{ \zeta \in \LL^2(\mathbf T, \mu) : \overline{\zeta(z)} = \zeta(\overline z) \; \mu\mbox{-almost everywhere}\right\}$$
together with an orthogonal representation
$$\pi^\mu : \Z \to \mathcal O(H_\R^\mu) : \left( \pi^\mu(n) \zeta \right)(z) = z^n \zeta(z).$$
A symmetric {\em Rajchman} measure $\mu$ on $\mathbf T$ is a symmetric Borel probability measure whose Fourier-Stieltjes coefficients $\widehat \mu(n) = \int_{\mathbf T} z^n {\rm d} \mu(z)$ converge to $0$ as $|n| \to + \infty$. Equivalently, the corresponding orthogonal representation $\pi^\mu : \Z \to \mathcal O(H_\R^\mu)$ is {\em mixing} in the sense that $\langle \pi^\mu(n)\zeta_1, \zeta_2\rangle \to 0$ as $|n| \to + \infty$ for all $\zeta_1, \zeta_2 \in H_\R^\mu$ (see e.g.\ \cite[Chapter 14, p.\ 369-371]{CFS}).

By the theorem, $\LL(\Z) \subset \Gamma(H_\R^\mu)\dpr \rtimes_{\pi^\mu} \Z$ is maximal amenable for all symmetric Rajchman measures $\mu$ on $\mathbf T$. By considering the $\LL(\Z)$-$\LL(\Z)$-bimodules $\LL^2 \left( (\Gamma(H_\R^\mu)\dpr \rtimes_{\pi^\mu} \Z) \ominus \LL(\Z) \right)$ and using a combination of results in \cite{{kahane-salem}, {kechris-louveau}}, we construct in Section $\ref{proof}$ a Borel map $\eta : 2^\N \to \Prob(\mathbf T^2) : x \mapsto \eta_x$ such that:
\begin{itemize}
\item The Borel probability measures $(\eta_x)_{x \in 2^\N}$ are pairwise singular and all singular with respect to the Haar measure on $\mathbf T^2$. 
\item The measure class $[\eta_x]$ corresponds to an $A$-$A$-bimodule of the form $\LL^2(M \ominus A)$ with $A \subset M$ maximal amenable masa in a ${\rm II_1}$ factor.
\end{itemize}
In particular, we obtain the following.

\begin{corollary}
There exists an explicit continuum $(\mathcal H(x))_{x \in 2^\N}$ of pairwise non-isomorphic 
 $A$-$A$-bimodules of the form $\LL^2(M \ominus A)$ where $A \subset M$ is an exotic maximal amenable masa in a ${\rm II_1}$ factor.
\end{corollary}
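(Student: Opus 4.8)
The plan is to feed the main Theorem the symmetric Rajchman measures $\mu$ and then to read off the $A$-$A$-bimodule $\LL^2(M_\mu\ominus A)$, where $M_\mu=\Gamma(H_\R^\mu)\dpr\rtimes_{\pi^\mu}\Z$ and $A=\LL(\Z)=\LL^\infty(\mathbf T,\Haar)$, directly in terms of $\mu$. First I would compute the bimodule explicitly. Decomposing the full Fock space gives
$$\LL^2(M_\mu\ominus A)=\bigoplus_{k\geq1}(H_\C^\mu)^{\otimes k}\otimes\ell^2(\Z),$$
where $H_\C^\mu=\LL^2(\mathbf T,\mu)$ is the complexification and $\pi^\mu_\C(n)$ is multiplication by $z^n$. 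On the $k$-th summand the left action of the generator $u\in\LL(\Z)$ is $\pi^\mu_\C(1)^{\otimes k}\otimes S$ (with $S$ the shift on $\ell^2(\Z)$) and the right action is $1\otimes S$. Fourier-transforming the $\ell^2(\Z)$ factor turns the right action into multiplication by the right variable $w$, while diagonalizing the $\Z$-representation $\pi^\mu_\C(\,\cdot\,)^{\otimes k}$ shows it has spectral measure the $k$-fold convolution $\mu^{\ast k}$ (since $\pi^\mu$ has spectral measure $\mu$ and tensoring representations corresponds to convolving on $\mathbf T$). A short computation then identifies the measure class of the $k$-th summand with the pushforward of $\mu^{\ast k}\otimes\Haar$ under $(v,w)\mapsto(vw,w)$; disintegrating over the right variable $t=w$ yields conditional measures $t\cdot\mu^{\ast k}$. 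Summing over $k$, the bimodule $\mathcal H(\mu):=\LL^2(M_\mu\ominus A)$ has measure class $[\eta_\mu]$ with disintegration
$$\eta_\mu=\int_{\mathbf T}t\cdot\Big(\bigvee_{k\geq1}\mu^{\ast k}\Big)\,\mathrm dt.$$

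From this dictionary both qualitative requirements become purely measure-theoretic. Because translation preserves the Lebesgue decomposition, $A\subset M_\mu$ is \emph{exotic} precisely when $\bigvee_{k\geq1}\mu^{\ast k}$ is atomless and singular with respect to $\Haar$. Since every Rajchman measure is atomless (by Wiener's theorem) and convolution with an atomless measure stays atomless, atomlessness is automatic, and the substantive condition is that \emph{all} convolution powers $\mu^{\ast k}$ be singular with respect to $\Haar$. Likewise, with $A=\LL^\infty(\mathbf T,\Haar)$ fixed, two such bimodules $\mathcal H(\mu_x)$ and $\mathcal H(\mu_y)$ are non-isomorphic as $A$-$A$-bimodules as soon as the measure classes $[\eta_{\mu_x}]$ and $[\eta_{\mu_y}]$ differ, and by the disintegration above $\eta_{\mu_x}\perp\eta_{\mu_y}$ is equivalent to $\bigvee_k\mu_x^{\ast k}\perp\bigvee_k\mu_y^{\ast k}$.

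The remaining — and main — task is harmonic-analytic: to produce a Borel family $(\mu_x)_{x\in2^\N}$ of symmetric Borel probability measures on $\mathbf T$ that are simultaneously (i) Rajchman, (ii) such that every convolution power $\mu_x^{\ast k}$ is singular with respect to $\Haar$, and (iii) such that the total spectra $\bigvee_k\mu_x^{\ast k}$ are pairwise singular. Here I would invoke the classical constructions of singular Rajchman measures (suitably lacunary Riesz products or random Cantor-type measures, \cite{kahane-salem}) together with the descriptive-set-theoretic machinery for the class of Rajchman measures \cite{kechris-louveau}, both to perform the construction Borel-measurably in the parameter $x$ and to force the pairwise singularity, for instance by arranging the measures to live on a transversal family of Cantor sets. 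Granting such a family, the main Theorem specialized to $G=\Z$ (through Theorem $\ref{theorem-AOP}$) shows each $A\subset M_{\mu_x}$ is a maximal amenable masa, the computation above shows it is exotic, and pairwise singularity of the $\eta_{\mu_x}$ yields pairwise non-isomorphic bimodules $\mathcal H(x)=\LL^2(M_{\mu_x}\ominus A)$, establishing the Corollary.

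I expect the principal obstacle to be exactly the conjunction in step (iii): keeping every convolution power singular with respect to $\Haar$ while \emph{simultaneously} arranging that distinct parameters give mutually singular measures, all with a Borel dependence on $x\in2^\N$. This is precisely the point at which the interplay between the concrete constructions of \cite{kahane-salem} and the effective/descriptive framework of \cite{kechris-louveau} must be brought to bear, whereas the bimodule computation of the first paragraph is essentially a bookkeeping exercise with the free Gaussian functor.
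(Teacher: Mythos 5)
Your reduction of the Corollary to a harmonic-analytic statement is essentially the same as the paper's: the bimodule $\LL^2(M_\mu \ominus A)$ has measure class the pushforward of $\mu^\infty \times \Haar$ under $(z_1,z_2)\mapsto(z_1\overline{z_2},z_2)$, where $\mu^\infty=\sum_{n\geq 1}2^{-n}\mu^{\ast n}$ is the maximal spectral type of $\bigoplus_{n\geq1}(\pi^\mu)^{\otimes n}$, so everything comes down to producing a Borel continuum of symmetric Rajchman measures $\mu_x$ whose total spectral types $\mu_x^\infty$ are atomless, pairwise singular, and singular with respect to $\Haar$. But you stop exactly where the work is: you write ``Granting such a family\dots'' and offer only the vague suggestion of ``a transversal family of Cantor sets.'' That is a genuine gap, and the suggestion as stated would not close it. Disjointness (or transversality) of the supports of $\mu_x$ and $\mu_y$ does not survive convolution: $\supp(\mu_x^{\ast k})$ lives in the $k$-fold product set $\supp(\mu_x)^k$, and for generic Cantor sets these product sets can overlap for different $x$, can acquire positive Haar measure, and can even fill the circle; so neither the pairwise singularity of the $\mu_x^\infty$ nor their singularity with respect to $\Haar$ follows from what you have arranged.

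The missing idea in the paper is to support all the $\mu_x$ on a single closed \emph{independent} set $\Lambda$ (together with $\overline\Lambda$). Independence is precisely what makes both properties stable under convolution: by \cite[Theorem 5.3.2]{rudin}, atomless measures concentrated on an independent set with pairwise disjoint supports have pairwise singular convolution powers, and by \cite[Theorem 5.3.6]{rudin} the subgroup $H(\Lambda)$ generated by $\Lambda$ is Haar-null, which forces every $\mu_x^{\ast n}$, being supported in $(\Lambda\cup\overline\Lambda)^n\subset H(\Lambda)$, to be singular with respect to $\Haar$. The existence of such a $\Lambda$ carrying a Borel continuum of symmetric Rajchman measures with pairwise disjoint supports is exactly what the combination of \cite[${\rm VIII}$, 3, Th\'eor\`eme ${\rm II}$]{kahane-salem} and \cite[${\rm VII}$, 1, Theorem 7]{kechris-louveau} provides. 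Your first two paragraphs (the bimodule bookkeeping and the translation into measure theory, including the observation that Rajchman measures are automatically atomless) match the paper and are fine; the Corollary is not proved until this independence mechanism, or something equivalent, is supplied.
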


By Voiculescu's celebrated result \cite[Corollary 7.6]{voiculescu96}, the ${\rm II_1}$ factors arising in the corollary are not $\ast$-isomorphic to interpolated free group factors in the sense of \cite{{dykema94}, {radulescu94}}. Moreover, by \cite[Theorem B]{houdayer-shlyakhtenko}, these ${\rm II_1}$ factors are also {\em strongly solid} in the sense of \cite[Section 4]{ozawa-popa}, that is, the normalizer of any diffuse amenable subalgebra generates an amenable subalgebra.

\subsection*{Acknowledgments} I am grateful to R\'emi Boutonnet and Sven Raum for their useful comments and careful reading of a first draft of this paper.

\section{Preliminaries}

\subsection{Elementary facts on $\varepsilon$-orthogonality}

\begin{df}
Let $H$ be a Hilbert space, $K, L \subset H$ closed subspaces and $\varepsilon \geq 0$. We say that $K$ and $L$ are $\varepsilon$-{\em orthogonal} and write $K \perp_{\varepsilon} L$ if 
$$|\langle \xi, \eta \rangle| \leq \varepsilon \|\xi\| \|\eta\|, \forall \xi \in K, \forall \eta \in L.$$
\end{df}

Observe that when $K \perp_\varepsilon L$ with $\varepsilon < 1$, we have that $K + L$ is closed. Let $H$ be a Hilbert space and $p, q \in \mathbf B(H)$ projections. We have that $pH \perp_\varepsilon qH$ if and only if $\|p q\|_\infty \leq\varepsilon$. Therefore, whenever $pH \perp_\varepsilon qH$, for all $\xi \in H$ we get
$$
\begin{aligned}
\|p \xi\|^2 + \|q \xi\|^2 & = \|p (q \xi + (p \vee q - q) \xi)\|^2 + \|q \xi\|^2 \\
& = \|p q \xi + p(p \vee q - q) \xi \|^2 + \|q \xi\|^2 \\
& \leq \|p q \xi\|^2 + \|(p \vee q - q)\xi\|^2 + 2 \|pq \xi\| \|(p \vee q - q) \xi\| + \|q \xi\|^2 \\
& \leq (1 + \varepsilon)^2 \|(p \vee q) \xi\|^2.
\end{aligned}
$$

\begin{lem}\label{4proj}
Let $0 \leq \varepsilon < \frac12$. Let $p_1, p_2, p_3, p_4 \in \mathbf B(H)$ be projections which satisfy $p_i H \perp_{\varepsilon} p_j H$ for all $i, j \in \{1, 2, 3, 4\}$ such that $i \neq j$. We have
$$(p_1 \vee p_2) H \perp_{\delta(\varepsilon)} (p_3 \vee p_4) H$$
with $\delta(\varepsilon) = \frac{2 \varepsilon}{\sqrt{1 - \varepsilon - \sqrt{2} \, \varepsilon \sqrt{1 - \varepsilon}}}$.
\end{lem}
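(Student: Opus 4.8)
The plan is to use the operator-norm reformulation of $\varepsilon$-orthogonality recorded just before the statement: since $p H \perp_\varepsilon q H$ is equivalent to $\|p q\|_\infty \le \varepsilon$, proving $(p_1 \vee p_2) H \perp_{\delta(\varepsilon)} (p_3 \vee p_4) H$ amounts to showing $\|(p_1 \vee p_2)\zeta\| \le \delta(\varepsilon)\|\zeta\|$ for every $\zeta \in (p_3 \vee p_4)H$. Write $P = p_1 \vee p_2$ and $Q = p_3 \vee p_4$, fix $\zeta \in QH$, and set $v = P\zeta \in PH$. Since $(I - P)\zeta$ is orthogonal to $PH \supseteq p_i H$, one checks that $p_i (I-P)\zeta = 0$, so $p_i \zeta = p_i v$ for $i \in \{1,2\}$. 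Thus it suffices to bound the two projection norms $\|p_i v\| = \|p_i \zeta\|$ from above and then to recover $\|v\|$ from them.

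First I would bound $\|p_i \zeta\|$ for $i \in \{1,2\}$. Because $\varepsilon < 1$, the orthogonality $p_3 H \perp_\varepsilon p_4 H$ forces $p_3 H \cap p_4 H = 0$ and makes $p_3 H + p_4 H = QH$ a closed (non-orthogonal) direct sum, so $\zeta$ has a unique decomposition $\zeta = \zeta_3 + \zeta_4$ with $\zeta_j \in p_j H$. Expanding $\|\zeta\|^2$ and using $|\langle \zeta_3, \zeta_4 \rangle| \le \varepsilon \|\zeta_3\|\|\zeta_4\|$ gives $\|\zeta_3\|^2 + \|\zeta_4\|^2 \le \tfrac{1}{1-\varepsilon}\|\zeta\|^2$, whence $\|\zeta_3\| + \|\zeta_4\| \le \tfrac{\sqrt2}{\sqrt{1-\varepsilon}}\|\zeta\|$ by Cauchy--Schwarz. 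Writing $\|p_i \zeta\|^2 = \langle p_i \zeta, \zeta_3\rangle + \langle p_i \zeta, \zeta_4\rangle$ and invoking the cross $\varepsilon$-orthogonalities $p_i H \perp_\varepsilon p_j H$ (valid since $i \in \{1,2\}$ and $j \in \{3,4\}$ are distinct) yields $\|p_i \zeta\|^2 \le \varepsilon \|p_i \zeta\|(\|\zeta_3\| + \|\zeta_4\|)$, and therefore $\|p_i \zeta\| \le \tfrac{\sqrt2\,\varepsilon}{\sqrt{1-\varepsilon}}\|\zeta\|$.

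Next I would bound $\|v\|$ from below by $\|p_1 v\|^2 + \|p_2 v\|^2$. Decomposing $v = v_1 + v_2$ with $v_i \in p_i H$ (again a closed direct sum, using $p_1 H \perp_\varepsilon p_2 H$ and $\varepsilon < 1$) and expanding $\|p_1 v\|^2 = \|v_1 + p_1 v_2\|^2$ and $\|p_2 v\|^2 = \|p_2 v_1 + v_2\|^2$ with $\langle v_1, p_1 v_2\rangle = \langle v_1, v_2\rangle$, one obtains $\|p_1 v\|^2 + \|p_2 v\|^2 = \|v\|^2 + 2\operatorname{Re}\langle v_1, v_2\rangle + \|p_1 v_2\|^2 + \|p_2 v_1\|^2 \ge \|v\|^2 - 2\varepsilon \|v_1\|\|v_2\|$. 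Using $\|v_1\|\|v_2\| \le \tfrac{1}{\sqrt{2(1-\varepsilon)}}\|v\|^2$, which is valid for $\varepsilon \le \tfrac12$ by the norm control on the decomposition, this becomes the lower bound $\|p_1 v\|^2 + \|p_2 v\|^2 \ge f(\varepsilon)\|v\|^2$ with $f(\varepsilon) = 1 - \tfrac{\sqrt2\,\varepsilon}{\sqrt{1-\varepsilon}}$; the companion upper bound $\|p_1 v\|^2 + \|p_2 v\|^2 \le (1+\varepsilon)^2\|v\|^2$ recorded before the lemma is what keeps these quantities comparable. Combining with the first step, $f(\varepsilon)\|v\|^2 \le \|p_1 \zeta\|^2 + \|p_2 \zeta\|^2 \le \tfrac{4\varepsilon^2}{1-\varepsilon}\|\zeta\|^2$, so $\|v\|^2 \le \tfrac{4\varepsilon^2}{(1-\varepsilon)f(\varepsilon)}\|\zeta\|^2$, and since $(1-\varepsilon)f(\varepsilon) = (1-\varepsilon) - \sqrt2\,\varepsilon\sqrt{1-\varepsilon}$ this is exactly $\delta(\varepsilon)^2\|\zeta\|^2$.

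The main obstacle is the lower bound $\|p_1 v\|^2 + \|p_2 v\|^2 \ge f(\varepsilon)\|v\|^2$ of the third step: it asserts that the two projections $p_1, p_2$ jointly recapture a definite proportion of \emph{any} vector in their join $PH$, a statement that can fail through destructive interference between the cross terms when $p_1 H$ and $p_2 H$ are allowed to be too $\varepsilon$-orthogonal. The constant $f(\varepsilon)$ degenerates to $0$ precisely as $\varepsilon \to \tfrac12$, since $\sqrt{1-\varepsilon} - \sqrt2\,\varepsilon > 0$ is equivalent to $\varepsilon < \tfrac12$, and this is exactly why the hypothesis $\varepsilon < \tfrac12$ cannot be dispensed with. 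Everything else is routine Cauchy--Schwarz bookkeeping once the non-orthogonality of the two decompositions has been absorbed into the factor $\tfrac{1}{1-\varepsilon}$.
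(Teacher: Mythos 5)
Your proof is correct and lands exactly on the constant $\delta(\varepsilon)$, but it is organized genuinely differently from the paper's. The paper isolates a single three-projection fact --- if $q_1H$, $q_2H$, $q_3H$ are pairwise $\varepsilon$-orthogonal then $(q_1\vee q_2)H\perp_{\varepsilon'}q_3H$ with $\varepsilon'=\sqrt{2}\,\varepsilon/\sqrt{1-\varepsilon}$ --- and simply applies it twice: first to get $(p_1\vee p_2)H\perp_{\varepsilon'}p_jH$ for $j=3,4$, then again at level $\varepsilon'$ to absorb $p_3\vee p_4$, so that $\delta(\varepsilon)$ arises as the two-fold composition $\varepsilon\mapsto\varepsilon'\mapsto\varepsilon''$. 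Your first step ($\|p_i\zeta\|\le\varepsilon'\|\zeta\|$ for $\zeta\in(p_3\vee p_4)H$, $i=1,2$) is precisely this fact in adjoint form, since $\|p_iQ\|_\infty=\|Qp_i\|_\infty$. Where you truly diverge is the second step: instead of rerunning the same fact, you prove the lower frame bound $\|p_1v\|^2+\|p_2v\|^2\ge(1-\varepsilon')\|v\|^2$ for $v\in(p_1\vee p_2)H$, an inequality in the opposite direction to the one the paper ever needs (Proposition \ref{projections} uses only the upper bound $\le(1+\varepsilon)^2\|v\|^2$). This buys a one-pass argument on a fixed test vector and makes transparent why $\varepsilon<\tfrac12$ cannot be dropped --- the frame constant $1-\varepsilon'$ vanishes exactly there --- at the price of the slightly heavier identity $\|p_1v\|^2+\|p_2v\|^2=\|v\|^2+2\operatorname{Re}\langle v_1,v_2\rangle+\|p_1v_2\|^2+\|p_2v_1\|^2$. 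One remark: your decomposition estimate actually yields $\|v_1\|\|v_2\|\le\tfrac{1}{2(1-\varepsilon)}\|v\|^2$, which you deliberately weaken to $\tfrac{1}{\sqrt{2(1-\varepsilon)}}\|v\|^2$ so as to reproduce $\delta(\varepsilon)$; keeping the sharper bound would give the strictly better constant $2\varepsilon/\sqrt{1-2\varepsilon}$, which of course still proves the lemma.
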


\begin{proof}
We first prove the following easy fact: whenever $0 \leq \varepsilon < 1$ and $q_1, q_2, q_3 \in \mathbf B(H)$ are projections which satisfy $q_1 H \perp_\varepsilon q_2 H$, $q_2 H \perp_\varepsilon q_3 H$, $q_3 H \perp_\varepsilon q_1 H$, we have $(q_1 \vee q_2) H \perp_{\varepsilon'} q_3 H$ with $\varepsilon' = \frac{\sqrt{2} \, \varepsilon}{\sqrt{1 - \varepsilon}}$. Indeed, let $\xi_i \in q_i H$ for $i = 1, 2$. We have 
$$\|q_3 (\xi_1 + \xi_2)\|^2 \leq 2(\|q_3 \xi_1\|^2 + \|q_3 \xi_2\|^2) \leq 2 \varepsilon^2 (\|\xi_1\|^2 + \|\xi_2\|^2).$$
We moreover have
$$\|\xi_1 + \xi_2\|^2 \geq \|\xi_1\|^2 + \|\xi_2\|^2 - 2 \varepsilon \|\xi_1\| \|\xi_2\| \geq (1 - \varepsilon) (\|\xi_1\|^2 + \|\xi_2\|^2).$$
Altogether, we get
$$\|q_3 (\xi_1 + \xi_2)\|^2 \leq \frac{2 \varepsilon^2}{1 - \varepsilon} \|\xi_1 + \xi_2\|^2.$$

Let now $0 \leq \varepsilon < \frac12$. Applying the fact, we get $(p_1 \vee p_2) H \perp_{\varepsilon'} p_3 H$ and $(p_1 \vee p_2) H \perp_{\varepsilon'} p_4 H$ with $\varepsilon' = \frac{\sqrt{2} \, \varepsilon}{\sqrt{1 - \varepsilon}} < 1$. Applying once more the fact, we get $(p_1 \vee p_2) H \perp_{\varepsilon''} (p_3 \vee p_4) H$ with $\varepsilon'' = \frac{\sqrt{2} \, \varepsilon'}{\sqrt{1 - \varepsilon'}} = \frac{2 \varepsilon}{\sqrt{1 - \varepsilon - \sqrt{2} \, \varepsilon \sqrt{1 - \varepsilon}}}$.
\end{proof}

Write $\delta : [0, \frac12) \to \R_+ : \varepsilon \mapsto \frac{2 \varepsilon}{\sqrt{1 - \varepsilon - \sqrt{2} \, \varepsilon \sqrt{1 - \varepsilon}}}$ for the function which appears in Lemma $\ref{4proj}$.

\begin{prop}\label{projections}
Let $k \geq 1$. Let $0 \leq \varepsilon < 1$ such that $\delta^{\circ (k - 1)}(\varepsilon) < 1$. For $1 \leq i \leq 2^k$, let $p_i \in \mathbf B(H)$ be projections such that $p_i H \perp_\varepsilon p_j H$ for all $i, j \in \{1, \dots , 2^k\}$ such that $i \neq j$. Write $P_\ell = \bigvee_{i = 1}^{2^\ell} p_i$ for $1 \leq \ell \leq k$. Then for all $1 \leq \ell \leq k$ and all $\xi \in H$, we have
$$\sum_{i = 1}^{2^\ell} \|p_i \xi\|^2 \leq \prod_{j = 0}^{\ell - 1} \left( 1 + \delta^{\circ j}(\varepsilon) \right)^2 \|P_\ell \xi\|^2.$$
\end{prop}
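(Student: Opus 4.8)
The plan is to run two nested inductions: an outer one on $\ell$ that yields the sum bound, and an auxiliary induction that supplies the ``block join'' orthogonality needed at each level. Throughout I abbreviate $\varepsilon_j = \delta^{\circ j}(\varepsilon)$ and $C_\ell = \prod_{j=0}^{\ell - 1}(1 + \varepsilon_j)^2$, so the goal at level $\ell$ reads $\sum_{i=1}^{2^\ell}\|p_i\xi\|^2 \le C_\ell\,\|P_\ell\xi\|^2$. The engine of the whole argument is the inequality established just before Lemma $\ref{4proj}$: whenever $pH \perp_{\delta} qH$ one has $\|p\xi\|^2 + \|q\xi\|^2 \le (1+\delta)^2\|(p\vee q)\xi\|^2$ for all $\xi \in H$; this holds for any $\delta \ge 0$ and will be invoked at each level with $\delta = \varepsilon_{\ell - 1}$. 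Before anything else I would record two elementary facts about the function $\delta$. Since $\sqrt{1 - x - \sqrt{2}\,x\sqrt{1-x}} \le 1$ on $[0,\tfrac12)$, we have $\delta(x) \ge 2x \ge x$, so the iterates $\varepsilon_j$ are nondecreasing in $j$, and $\delta$ is increasing on $[0,\tfrac12)$. Consequently the hypothesis $\varepsilon_{k-1} < 1$ forces $\varepsilon_j < 1$ for all $0 \le j \le k-1$; moreover $\varepsilon_{k-1} = \delta(\varepsilon_{k-2}) \ge 2\varepsilon_{k-2}$ gives $\varepsilon_{k-2} < \tfrac12$, whence by monotonicity $\varepsilon_j < \tfrac12$ for every $0 \le j \le k-2$. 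This is exactly the range in which Lemma $\ref{4proj}$ may be applied.

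Next I would prove the auxiliary claim: for $0 \le m \le k-1$, whenever $r_1, \dots, r_{2^m}, s_1, \dots, s_{2^m}$ are $2^{m+1}$ pairwise $\varepsilon$-orthogonal projections, one has $\bigl(\bigvee_i r_i\bigr)H \perp_{\varepsilon_m} \bigl(\bigvee_j s_j\bigr)H$. The base case $m = 0$ is the hypothesis $r_1 H \perp_\varepsilon s_1 H$. For the step from $m$ to $m+1$, split each family in half and set $R_1 = \bigvee_{i \le 2^m} r_i$, $R_2 = \bigvee_{i > 2^m} r_i$, $S_1 = \bigvee_{j \le 2^m} s_j$, $S_2 = \bigvee_{j > 2^m} s_j$. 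The induction hypothesis, applied to the six pairs of disjoint half-families among these, shows that $R_1, R_2, S_1, S_2$ are pairwise $\varepsilon_m$-orthogonal. Since $\varepsilon_m < \tfrac12$ for $m \le k-2$, Lemma $\ref{4proj}$ (with $p_1, p_2, p_3, p_4 = R_1, R_2, S_1, S_2$) yields $(R_1 \vee R_2)H \perp_{\delta(\varepsilon_m)} (S_1 \vee S_2)H$, which is precisely the desired $\varepsilon_{m+1}$-orthogonality of $\bigvee_i r_i$ and $\bigvee_j s_j$.

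Finally I would prove the proposition by induction on $\ell$, taking as inductive statement the sum bound for \emph{every} family of $2^\ell$ pairwise $\varepsilon$-orthogonal projections. For $\ell = 1$ the inequality $\|p_1\xi\|^2 + \|p_2\xi\|^2 \le (1+\varepsilon)^2\|P_1\xi\|^2$ is exactly the pre-Lemma inequality applied to $p_1 H \perp_\varepsilon p_2 H$. For the step, write $Q = \bigvee_{i \le 2^{\ell-1}} p_i = P_{\ell-1}$ and $Q' = \bigvee_{2^{\ell-1} < i \le 2^\ell} p_i$, so $P_\ell = Q \vee Q'$. Applying the induction hypothesis to each half gives $\sum_{i \le 2^{\ell-1}}\|p_i\xi\|^2 \le C_{\ell-1}\|Q\xi\|^2$ and $\sum_{2^{\ell-1} < i \le 2^\ell}\|p_i\xi\|^2 \le C_{\ell-1}\|Q'\xi\|^2$; the auxiliary claim with $m = \ell-1 \le k-1$ gives $QH \perp_{\varepsilon_{\ell-1}} Q'H$, and the pre-Lemma inequality then yields $\|Q\xi\|^2 + \|Q'\xi\|^2 \le (1+\varepsilon_{\ell-1})^2\|P_\ell\xi\|^2$. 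Summing the two half-bounds and substituting produces $\sum_{i=1}^{2^\ell}\|p_i\xi\|^2 \le C_{\ell-1}(1+\varepsilon_{\ell-1})^2\|P_\ell\xi\|^2 = C_\ell\|P_\ell\xi\|^2$, closing the induction.

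I expect the only delicate point to be the bookkeeping of the $\delta$-iterates: one must guarantee that each invocation of Lemma $\ref{4proj}$ has argument genuinely below $\tfrac12$ and that the final application of the two-projection inequality has a finite parameter $\varepsilon_{\ell-1} < 1$. This is precisely why the preliminary monotonicity analysis of $\delta$ (together with the bound $\delta(x) \ge 2x$) is carried out first, and why the hypothesis is phrased as $\delta^{\circ(k-1)}(\varepsilon) < 1$ rather than as a direct bound on $\varepsilon$. Everything else is a routine halving argument feeding the two-projection inequality at each level.
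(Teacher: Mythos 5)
Your proof is correct, but it organizes the induction differently from the paper's. The paper inducts by coarsening at the \emph{bottom} of the dyadic tree: it pairs adjacent projections into $q_i = p_{2i-1}\vee p_{2i}$, invokes Lemma \ref{4proj} on quadruples of the original $p_i$ to see that the $q_i$ are pairwise $\delta(\varepsilon)$-orthogonal, applies the induction hypothesis to this smaller family with the \emph{degraded} parameter $\delta(\varepsilon)$, and finishes with $2^{k-1}$ applications of the two-projection inequality at parameter $\varepsilon$. You instead halve at the \emph{top}: the induction hypothesis is applied to the two half-families at the original parameter $\varepsilon$, and all of the degradation is isolated in your auxiliary claim that joins of two disjoint sub-families of size $2^m$ are $\delta^{\circ m}(\varepsilon)$-orthogonal --- a claim the paper never needs to formulate, since it only ever feeds Lemma \ref{4proj} the original projections. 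The two routes yield the same constant, since $\bigl(\prod_{j=0}^{\ell-2}(1+\delta^{\circ j}(\varepsilon))^2\bigr)(1+\delta^{\circ(\ell-1)}(\varepsilon))^2 = (1+\varepsilon)^2\prod_{j=1}^{\ell-1}(1+\delta^{\circ j}(\varepsilon))^2$. The price of your version is the extra induction plus the preliminary analysis of $\delta$ (monotonicity, $\delta(x)\ge 2x$, and the deduction that all iterates below the top one lie in $[0,\tfrac12)$), which is genuinely needed to certify each application of Lemma \ref{4proj} at parameter $\delta^{\circ m}(\varepsilon)$; the paper avoids this bookkeeping because the only parameter it ever passes to Lemma \ref{4proj} is $\varepsilon$ itself, which the mere well-definedness of $\delta^{\circ(k-1)}(\varepsilon)$ already forces below $\tfrac12$ when $k\ge 2$. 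In exchange, your induction on $\ell$ delivers the statement for all $1\le\ell\le k$ at once, whereas the paper's induction on $k$ gives the case $\ell=k$ and recovers general $\ell$ by restricting to the first $2^\ell$ projections.
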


\begin{proof}
We prove the result by induction on $k \geq 1$. It is clear for $k = 1$ as we observed above. Assume it is true for $k - 1 \geq 1$. Write $q_i = p_{2i - 1} \vee p_{2i}$ for all $i \in \{1, \dots , 2^{k - 1}\}$. By Lemma $\ref{4proj}$, we have $q_i H \perp_{\delta(\varepsilon)} q_j H$ for all $i, j \in \{1, \dots, 2^{k - 1}\}$ such that $i \neq j$. Observe that $\bigvee_{i = 1}^{2^{k - 1}} q_i = P_k$. Since $\delta^{\circ (k - 2)}(\delta(\varepsilon)) = \delta^{\circ (k - 1)}(\varepsilon) < 1$, the induction hypothesis yields 
$$\sum_{i = 1}^{2^{k - 1}} \|q_i \xi\|^2 \leq \prod_{j = 0}^{k - 2} \left(1 + \delta^{\circ j}(\delta(\varepsilon)) \right)^2 \|P_k \xi\|^2 =  \prod_{j = 1}^{k - 1} \left(1 + \delta^{\circ j}(\varepsilon) \right)^2 \|P_k \xi\|^2$$
for all $\xi \in H$. Since moreover, we have
$$\|p_{2i - 1} \xi\|^2 + \|p_{2i} \xi\|^2 \leq (1 + \varepsilon)^2 \|q_i \xi\|^2$$
for all $i \in \{1, \dots, 2^{k - 1}\}$ and all $\xi \in H$, it follows that
$$\sum_{i = 1}^{2^k} \|p_i \xi\|^2 \leq (1 + \varepsilon)^2 \sum_{i = 1}^{2^{k - 1}} \|q_i \xi\|^2 \leq  \prod_{j = 0}^{k - 1} \left( 1 + \delta^{\circ j}(\varepsilon) \right)^2 \|P_k \xi\|^2, \forall \xi \in H.$$
\end{proof}

\subsection{Voiculescu's free Gaussian functor \cite{{voiculescu85}, {voiculescu92}}}

Let $H_\R$ be a real separable Hilbert space. Let $H = H_\R \otimes_\R \C = H_\R \oplus {\rm i} H_\R$ be the corresponding complexified Hilbert space. The canonical complex conjugation on $H$ will be simply denoted by $\overline{e + {\rm i} f} = e - {\rm i} f$ for all $e, f \in H_\R$. The \emph{full Fock space} of $H$ is defined by
\begin{equation*}
\mathcal{F}(H) =\C\Omega \oplus \bigoplus_{n \geq 1} H^{\otimes n}.
\end{equation*}
The unit vector $\Omega$ is called the \emph{vacuum vector}. For all $e \in H$, we define the \emph{left creation operator}
$$
\ell(e) : \mathcal{F}(H) \to \mathcal{F}(H) : \left\{ 
{\begin{array}{l} \ell(e)\Omega = e \\ 
\ell(e)(e_1 \otimes \cdots \otimes e_n) = e \otimes e_1 \otimes \cdots \otimes e_n.
\end{array}} \right.
$$
We have $\ell(e)^* \ell(f) = \langle e, f\rangle$ for all $e, h \in H$. In particular, $\ell(e)$ is an isometry for all unit vectors $e \in H$.

For all $e \in H_\R$, put $W(e) := \ell(e) + \ell(e)^*$. Voiculescu's result \cite[Lemma 2.6.3]{voiculescu92} shows that the distribution of the selfadjoint operator $W(e)$ with respect to the vacuum vector state $\langle \cdot \Omega, \Omega\rangle$ is the semicircular law supported by the interval $[-2 \| e \|, 2 \| e \|]$. Moreover, \cite[Lemma 2.6.6]{voiculescu92} shows that for any subset $\Xi \subset H_\R$ of pairwise orthogonal vectors, the family $\{W(e) : e \in \Xi\}$ is freely independent.

We denote by $\Gamma(H_\R)$ the C$^*$-algebra generated by $\{W(e) : e \in H_\R\}$ and $\Gamma(H_\R)\dpr$ the von Neumann algebra generated by $\Gamma(H_\R)$. The vector state $\tau = \langle \cdot \Omega, \Omega\rangle$ is a faithful normal trace on $\Gamma(H_\R)\dpr$ and we have that $\Gamma(H_\R)\dpr$ is $\ast$-isomorphic to the free group factor on $\dim H_\R$ generators, that is, $\Gamma(H_\R)\dpr \cong \LL(\F_{\dim H_\R})$.

Since the vacuum vector $\Omega$ is separating and cyclic for $\Gamma(H_\R)\dpr$, any
$x \in \Gamma(H_\R)\dpr$ is uniquely determined by $\xi = x \Omega \in \mathcal{F}(H)$. Thus we will write $x = W(\xi)$. Note that for $e \in H_\R$, we recover the semicircular random variables $W(e) = \ell(e) + \ell(e)^*$ generating $\Gamma(H_\R)\dpr$. More generally we have $W(e) = \ell(e) + \ell(\overline e)^*$ for all $e \in H$. Given any vectors $e_i \in H$, it is easy to check that $e_1\otimes \cdots \otimes e_n$ lies in $\Gamma(H_\R)\dpr \Omega$. The corresponding words $W(e_1 \otimes \cdots \otimes e_n) \in \Gamma(H_\R)\dpr$ enjoy useful properties that are summarized in the following.

\begin{prop}\label{wick}
Let $e_i \in H$, for $i \geq 1$. The following are true:
\begin{enumerate}
\item We have the {\em Wick formula}:
$$W(e_1 \otimes \cdots \otimes e_n) = \sum_{k = 0}^n \ell(e_1) \cdots \ell(e_k) \ell(\overline e_{k + 1})^* \cdots \ell(\overline e_n)^*.$$
\item  If  $\langle \overline e_r, e_{r + 1}\rangle = 0$ then we have
$$W(e_1 \otimes \cdots \otimes e_r) W(e_{r + 1} \otimes \cdots \otimes e_n) = W(e_1 \otimes \cdots \otimes e_r \otimes e_{r + 1} \otimes \cdots \otimes e_n).$$
\item We have $W(e_1 \otimes \cdots \otimes e_n)^* = W(\overline e_n \otimes \cdots \otimes \overline e_1)$.
\item The linear span of $\{1, W(e_1 \otimes \cdots \otimes e_n) : n \geq 1, e_i \in H \}$ forms a unital weakly dense $\ast$-subalgebra of $\Gamma(H_\R)\dpr$.
\end{enumerate}
\end{prop}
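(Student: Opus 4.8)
The plan is to prove the Wick formula (1) first and then to read off (2), (3) and (4) from it, using repeatedly the relation $\ell(e)^* \ell(f) = \langle e, f\rangle$. Write $T(e_1, \dots, e_n)$ for the operator on the right-hand side of (1), with the conventions $T() = 1$ and $T(e_1) = \ell(e_1) + \ell(\overline e_1)^* = W(e_1)$; this is a finite sum of products of bounded operators, hence bounded. Since the vacuum $\Omega$ is separating for $\Gamma(H_\R)\dpr$, to prove (1) it is enough to check that $T(e_1, \dots, e_n) \in \Gamma(H_\R)\dpr$ and that $T(e_1, \dots, e_n)\Omega = e_1 \otimes \cdots \otimes e_n$.

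The vacuum computation is immediate: as $\ell(f)^*\Omega = 0$, every summand of $T(e_1, \dots, e_n)$ with $k < n$ annihilates $\Omega$, so only the top term survives and $T(e_1, \dots, e_n)\Omega = \ell(e_1) \cdots \ell(e_n)\Omega = e_1 \otimes \cdots \otimes e_n$. I would prove membership by induction on $n$, the key point being the recursion
\[
T(e_1, \dots, e_n) = W(e_1)\, T(e_2, \dots, e_n) - \langle \overline e_1, e_2 \rangle\, T(e_3, \dots, e_n).
\]
To establish it, split $W(e_1)\, T(e_2, \dots, e_n) = \ell(e_1)\, T(e_2, \dots, e_n) + \ell(\overline e_1)^*\, T(e_2, \dots, e_n)$. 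The first summand prepends $\ell(e_1)$ and reproduces exactly the terms of $T(e_1, \dots, e_n)$ with $k \geq 1$. In the second summand, the only contraction that does not vanish against the leading creation operator of a term of $T(e_2, \dots, e_n)$ is $\ell(\overline e_1)^* \ell(e_2) = \langle \overline e_1, e_2\rangle$, which turns every term of $T(e_2, \dots, e_n)$ beginning with a creation into the corresponding term of $\langle \overline e_1, e_2\rangle\, T(e_3, \dots, e_n)$, while the unique all-annihilation term of $T(e_2, \dots, e_n)$ produces the $k = 0$ term of $T(e_1, \dots, e_n)$. Collecting terms gives the recursion. Since $W(e_1) \in \Gamma(H_\R)\dpr$ and, by induction, $T(e_2, \dots, e_n), T(e_3, \dots, e_n) \in \Gamma(H_\R)\dpr$, membership follows. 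I expect this bookkeeping — keeping track of precisely which summands survive the single surviving contraction — to be the main technical point.

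With (1) in hand, (2) follows by multiplying the two Wick expansions term by term: a product of a term of $W(e_1 \otimes \cdots \otimes e_r)$ that ends in an annihilation with a term of $W(e_{r+1} \otimes \cdots \otimes e_n)$ that begins with a creation contains the innermost factor $\ell(\overline e_r)^* \ell(e_{r+1}) = \langle \overline e_r, e_{r+1}\rangle = 0$ and so vanishes; the only surviving products are those in which the first factor consists solely of creation operators or the second solely of annihilation operators, and these reassemble into $\sum_{m=0}^n \ell(e_1) \cdots \ell(e_m)\ell(\overline e_{m+1})^* \cdots \ell(\overline e_n)^* = W(e_1 \otimes \cdots \otimes e_n)$. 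For (3), I would take the adjoint of the Wick formula, which reverses each product and turns creation operators into annihilation operators and vice versa, and then reindex $k \mapsto n - k$ to identify the outcome with the Wick expansion of $W(\overline e_n \otimes \cdots \otimes \overline e_1)$ (using $\overline{\overline e} = e$).

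Finally, for (4): closedness under the adjoint is exactly (3). The recursion shows by induction that each $W(e_1 \otimes \cdots \otimes e_n)$ is a noncommutative polynomial in $\{W(e) : e \in H\}$, so the linear span $\mathcal A$ of $\{1\} \cup \{W(e_1 \otimes \cdots \otimes e_n)\}$ is contained in the unital algebra generated by these; rewriting the recursion as $W(e_1)\, W(e_2 \otimes \cdots \otimes e_n) = W(e_1 \otimes \cdots \otimes e_n) + \langle \overline e_1, e_2\rangle\, W(e_3 \otimes \cdots \otimes e_n)$ shows that $\mathcal A$ is invariant under left multiplication by every $W(e)$, hence contains all such polynomials. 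Thus $\mathcal A$ is a unital subalgebra, and since $W(a + {\rm i} b) = W(a) + {\rm i}\, W(b)$ for $a, b \in H_\R$, it contains the unital $\ast$-algebra generated by $\{W(e) : e \in H_\R\}$, which is weakly dense in $\Gamma(H_\R)\dpr$ by definition of the latter.
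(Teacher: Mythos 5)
Your proposal is correct and follows essentially the same route as the paper: the same recursion $W(e_1\otimes\cdots\otimes e_n)=W(e_1)W(e_2\otimes\cdots\otimes e_n)-\langle\overline e_1,e_2\rangle W(e_3\otimes\cdots\otimes e_n)$ drives the induction for (1) and the algebra statement (4), the same vanishing contraction $\ell(\overline e_r)^*\ell(e_{r+1})=0$ gives (2), and (3) is read off by taking adjoints in the Wick expansion. The only differences are organizational (you verify the recursion directly on the operator sum $T$ and then invoke the separating vacuum, and you identify the linear span with the unital algebra generated by the $W(e)$ rather than checking closure under products of two words), and all the bookkeeping checks out.
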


\begin{proof}
The proof of $(1)$ is borrowed from \cite[Lemma 3.2]{houdayer-ricard}. We prove the formula by induction on $n$. For $n \in \{0, 1\}$, we have $W(\Omega)=1$ and we already observed that $W(e_i)=\ell(e_i) + \ell(\overline e_i)^*$.

Next, for $e_{0}\in H$, we have 
$$
\begin{aligned}
W(e_{0})W(e_1\otimes \cdots \otimes e_n)\Omega & = W(e_{0})(e_1\otimes \cdots \otimes e_n)
\\ 
& = (\ell(e_{0})+\ell(\overline e_{0})^*)e_1\otimes \cdots \otimes e_n \\ 
& = e_{0}\otimes e_1\otimes \cdots \otimes e_n + \langle \overline e_{0},e_{1}\rangle \, e_2\otimes \cdots \otimes e_n.
\end{aligned}
$$
So, we obtain 
$$
\begin{aligned}
W(e_0\otimes \cdots \otimes e_n) & =W(e_{0})W(e_1\otimes \cdots \otimes e_n) - \langle \overline e_{0},e_{1}\rangle W(e_2\otimes \cdots \otimes e_n) \\
& = \ell(\overline e_{0})^*W(e_1\otimes \cdots \otimes e_n) - \langle \overline e_{0},e_{1}\rangle W(e_2\otimes \cdots \otimes e_n) \\
& \ \ \ + \ell(e_{0}) W(e_1\otimes \cdots \otimes e_n).
\end{aligned}
$$
Using the assumption for $n$ and $n-1$ and the relation $\ell(\overline e_0)^*\ell(e_1) = \langle \overline e_0, e_1\rangle$, we obtain
$$\ell(\overline e_{0})^*W(e_1\otimes \cdots \otimes e_n)=\langle \overline
e_{0},e_{1}\rangle W(e_2\otimes \cdots \otimes e_n) + \ell(\overline
e_{0})^*\ell(\overline e_{1})^* \cdots \ell(\overline e_{n})^*.$$ 
Since $\ell(e_{0})W(e_1\otimes \cdots \otimes e_n)$ gives the last $n + 1$ terms in
the Wick formula at order $n+1$ and $\ell(\overline e_{0})^*\ell(\overline e_{1})^* \cdots \ell(\overline e_{n})^*$ gives the first term, we are done.

$(2)$ By the Wick formula, we have that $W(e_1 \otimes \cdots \otimes e_r) W(e_{r + 1} \otimes \cdots \otimes e_n)$ is equal to 
$$\sum_{0 \leq j \leq r \leq k \leq n} \ell(e_1) \cdots \ell(e_j) \ell(\overline e_{j + 1})^* \cdots \ell(\overline e_r)^* \ell(e_{r + 1}) \cdots \ell(e_k) \ell(\overline e_{k + 1})^* \cdots \ell(\overline e_n)^*.
$$
Whenever $j \leq r - 1$ and $k \geq r + 1$, since $\langle \overline e_r, e_{r + 1} \rangle = 0$, we have 
$$\ell(e_1) \cdots \ell(e_j) \ell(\overline e_{j + 1})^* \cdots \ell(\overline e_r)^* \ell(e_{r + 1}) \cdots \ell(e_k) \ell(\overline e_{k + 1})^* \cdots \ell(\overline e_n)^* = 0.$$
Therefore the above sum simply equals
$$
\sum_{0 \leq j \leq r - 1} \ell(e_1) \cdots \ell(e_j) \ell(\overline e_{j + 1})^* \cdots \ell(\overline e_n)^* + \sum_{r \leq k \leq n} \ell(e_1) \cdots \ell(e_k) \ell(\overline e_{k + 1})^* \cdots \ell(\overline e_n)^*
$$
and so $W(e_1 \otimes \cdots \otimes e_r) W(e_{r + 1} \otimes \cdots \otimes e_n) = W(e_1 \otimes \cdots \otimes e_r \otimes e_{r + 1} \otimes \cdots \otimes e_n)$.

$(3)$ It is a straightforward consequence of $(1)$. 

$(4)$ Denote by $\mathcal W$ the linear span of $\{1, W(e_1 \otimes \cdots \otimes e_n) : n \geq 1, e_i \in H \}$. We only have to show that $\mathcal W$ is stable under taking products. Let $e_0, \dots, e_m$, $f_1, \dots, f_n \in H$. We prove by induction on $m$ that $W(e_0 \otimes \cdots \otimes e_m) W(f_1 \otimes \cdots \otimes f_n) \in \mathcal W$. As we observed above, we have
$$ W(e_{0})W(f_1\otimes \cdots \otimes f_n) = W(e_0\otimes f_1 \otimes \cdots \otimes f_n)  + \langle \overline e_{0}, f_{1} \rangle W(f_2\otimes \cdots \otimes f_n) \in \mathcal W$$
so the result is true for $m = 0$. Assume it is true for all $0 \leq k \leq m - 1$. We can write $W(e_0 \otimes \cdots \otimes e_m) W(f_1 \otimes \cdots \otimes f_n)$ as
$$W(e_0) W(e_1 \otimes \cdots \otimes e_m) W(f_1 \otimes \cdots \otimes f_n) - \langle \overline e_0, e_1 \rangle W(e_2 \otimes \cdots \otimes e_m) W(f_1 \otimes \cdots \otimes f_n).$$
Using the induction hypothesis, we get that $W(e_0 \otimes \cdots \otimes e_m) W(f_1 \otimes \cdots \otimes f_n) \in \mathcal W$. This shows that $\mathcal W$ is a unital weakly dense $\ast$-subalgebra of $\Gamma(H_\R)\dpr$.
\end{proof}

Let $G$ be a countable group together with an orthogonal representation $\pi : G \to \mathcal{O}(H_\R)$. We shall still denote by $\pi : G \to \mathcal{U}(H)$ the corresponding unitary representation on the complexified Hilbert space $H = H_\R \otimes_\R \C$. The {\it free Bogoljubov action} $\sigma^\pi : G \curvearrowright (\Gamma(H_\R)\dpr, \tau)$ associated with the representation $\pi$ is defined by
$$
\sigma_g^\pi = \Ad(\mathcal{F}(\pi(g))), \forall g \in G,
$$
where $\mathcal{F}(\pi(g)) = \id_{\C \Omega} \oplus \bigoplus_{n \geq 1} \pi(g)^{\otimes n} \in \mathcal{U}(\mathcal{F}(H))$.

\begin{example}
If $(\pi, H_\R) = (\lambda_G, \ell_\R^2(G))$ is the left regular orthogonal representation of $G$, then the action $\sigma^{\lambda_G} : G \curvearrowright \Gamma(\ell^2_\R(G))\dpr$ is the free Bernoulli shift and in that case we have 
$$\left( \LL(G) \subset \Gamma(\ell^2_\R(G))\dpr \rtimes_{\lambda_G} G \right)  \cong \left( \LL(G) \subset \LL(\Z) \ast \LL(G) \right).$$
\end{example}

Recall that an orthogonal representation $\pi : G \to \mathcal O(H_\R)$ is {\em mixing} if $\lim_{g \to \infty} \langle \pi(g)\xi, \eta \rangle = 0$ for all $\xi, \eta \in H_\R$.

\begin{prop}
Let $G$ be a countable group together with an orthogonal representation $\pi : G \to \mathcal{O}(H_\R)$. The following are equivalent:
\begin{enumerate}
\item The representation $\pi : G \to \mathcal{O}(H_\R)$ is mixing.
\item The $\tau$-preserving action $\sigma^\pi : G \curvearrowright \Gamma(H_\R)\dpr$ is mixing, that is, 
\begin{equation*}
\lim_{g \to \infty} \tau(\sigma_g^\pi(x) y) = 0, \forall x, y \in \Gamma(H_\R)\dpr \ominus \C.
\end{equation*}
\end{enumerate}
\end{prop}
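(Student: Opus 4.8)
The plan is to transport everything to the full Fock space $\mathcal F(H)$ through the $\LL^2$-identification $x \mapsto x\Omega$. The starting point is the identity $\tau(ab) = \langle a\Omega, b^*\Omega\rangle$, valid since $\tau$ is the vacuum state and $\Omega$ is tracial. Applying it with $a = \sigma_g^\pi(x)$ and $b = y$, and using that $\mathcal F(\pi(g))$ fixes $\Omega$ so that $\sigma_g^\pi(x)\Omega = \mathcal F(\pi(g))\, x\, \mathcal F(\pi(g))^*\Omega = \mathcal F(\pi(g))(x\Omega)$, one gets
$$\tau(\sigma_g^\pi(x) y) = \langle \mathcal F(\pi(g))(x\Omega),\, y^*\Omega\rangle, \qquad \forall x, y \in \Gamma(H_\R)\dpr.$$
When $x, y \in \Gamma(H_\R)\dpr \ominus \C$, both $\xi := x\Omega$ and $\eta := y^*\Omega$ are orthogonal to $\Omega$, hence lie in $\mathcal F(H) \ominus \C\Omega = \bigoplus_{n \geq 1} H^{\otimes n}$. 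Thus the proposition reduces to the purely spatial statement that the matrix coefficients $g \mapsto \langle \mathcal F(\pi(g))\xi, \eta\rangle$ vanish at infinity for all $\xi, \eta \in \mathcal F(H)\ominus\C\Omega$ if and only if $\pi$ is mixing.

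For $(1) \Rightarrow (2)$, I would first observe that mixing of $\pi$ on $H_\R$ passes to the complexification $H$ (split vectors into real and imaginary parts) and then compute matrix coefficients on simple tensors. Since $\mathcal F(\pi(g))$ preserves the grading and restricts to $\pi(g)^{\otimes n}$ on $H^{\otimes n}$, one has
$$\langle \mathcal F(\pi(g))(e_1 \otimes \cdots \otimes e_n),\, f_1 \otimes \cdots \otimes f_m\rangle = \delta_{n, m} \prod_{i = 1}^n \langle \pi(g) e_i, f_i\rangle.$$
For $n = m \geq 1$ this tends to $0$: every factor is bounded by $\|e_i\|\,\|f_i\|$ and at least one of them tends to $0$ by mixing. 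The span of the simple tensors of degree $\geq 1$ is dense in $\mathcal F(H)\ominus\C\Omega$, because by Proposition $\ref{wick}(4)$ the Wick words are weakly dense in $\Gamma(H_\R)\dpr$ and $W(e_1 \otimes \cdots \otimes e_n)\Omega = e_1 \otimes \cdots \otimes e_n$. Since the operators $\mathcal F(\pi(g))$ are unitaries, hence contractions uniformly in $g$, a standard three-$\varepsilon$ approximation then upgrades the vanishing of the matrix coefficients from simple tensors to arbitrary $\xi, \eta \in \mathcal F(H)\ominus\C\Omega$, which gives $(2)$.

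For the converse $(2) \Rightarrow (1)$, I would specialize the displayed formula to degree-one vectors. For $\xi, \eta \in H_\R$ the selfadjoint elements $W(\xi), W(\eta)$ lie in $\Gamma(H_\R)\dpr \ominus \C$ and satisfy $W(\xi)\Omega = \xi$ and $W(\eta)^*\Omega = \eta$, so
$$\tau\!\left(\sigma_g^\pi(W(\xi))\, W(\eta)\right) = \langle \mathcal F(\pi(g))\xi, \eta\rangle = \langle \pi(g)\xi, \eta\rangle.$$
Hence $(2)$ forces $\langle \pi(g)\xi, \eta\rangle \to 0$ for all $\xi, \eta \in H_\R$, which is precisely $(1)$.

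I expect the only genuinely delicate point to be the approximation step in $(1) \Rightarrow (2)$: one has to choose the approximants inside $\mathcal F(H)\ominus\C\Omega$ and exploit the uniform bound $\|\mathcal F(\pi(g))\| = 1$ so that the error terms in the three-$\varepsilon$ estimate are controlled independently of $g$. Everything else is a direct Fock-space computation.
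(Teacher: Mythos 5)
Your proposal is correct and follows essentially the same route as the paper: both reduce the statement to the Wick words $W(e_1 \otimes \cdots \otimes e_n)$ (equivalently, simple tensors in the full Fock space), compute the resulting matrix coefficient as a product of terms $\langle \pi(g)e_i, \cdot \rangle$ that vanish by mixing, and specialize to degree-one words for the converse. Your spatial formulation via the Koopman operators $\mathcal F(\pi(g)) = \id_{\C\Omega} \oplus \bigoplus_{n \geq 1}\pi(g)^{\otimes n}$ is only a cosmetic variant of the paper's algebraic computation with Proposition \ref{wick}, though it does make the density/approximation step a transparent Hilbert-space matter rather than an appeal to weak density of the Wick word algebra.
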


\begin{proof}
$(1) \Rightarrow (2)$. Observe that since the linear span of 
$$\{1, W(e_1 \otimes \cdots \otimes e_n) : n \geq 1, e_i \in H\}$$
is a unital weakly dense $\ast$-subalgebra of $\Gamma(H_\R)\dpr$, it suffices to show that $\tau(\sigma_g^\pi(x)y) \to 0$ as $g \to \infty$ for $x = W(e_1 \otimes \cdots \otimes e_m)$, $y = W(f_1 \otimes \cdots \otimes f_n)$. Using Proposition $\ref{wick}$ $(2)$, for all $g \in G$, we get
$$\tau(\sigma_g^\pi(x) y) = \frac{\langle \pi(g)e_m, \overline f_1 \rangle}{\|\overline  f_1\|^2} \tau \left( W(\pi(g)e_1 \otimes \cdots \otimes \pi(g)e_{m - 1} \otimes \overline f_1) y \right).$$
Since $\pi$ is mixing, we obtain $\lim_{g \to \infty} \tau(\sigma_g^\pi(x) y)$.

$(2) \Rightarrow (1)$. Let $e, f \in H_\R$. Using Proposition $\ref{wick}$ $(2)$, for all $g \in G$, we get
$$\lim_{g \to \infty} \langle \pi(g)e, f \rangle = \lim_{g \to \infty} \tau \left( \sigma_g^\pi(W(e)) W(f) \right) = 0.$$
\end{proof}

As a consequence of the previous proposition and \cite[Theorem 3.1]{popa-malleable1}, we obtain that whenever $\pi : G \to \mathcal O(H_\R)$ is a mixing representation of an abelian group $G$, $\LL(G)$ is a {\em singular} masa in $\Gamma(H_\R)\dpr \rtimes_\pi G$, that is, $$\{u \in \mathcal U(\Gamma(H_\R)\dpr \rtimes_\pi G) : u \LL(G) u^* = \LL(G)\} = \mathcal U(\LL(G)).$$

Finally, recall from \cite[Theorem 5.1]{houdayer-shlyakhtenko} that whenever the orthogonal representation $\pi : G \to \mathcal O(H_\R)$ is faithful, the associated free Bogoljubov action $\sigma^\pi : G \curvearrowright \Gamma(H_\R)\dpr$ is {\em properly outer}, that is, $\sigma_g^\pi \notin \Inn(\Gamma(H_\R)\dpr)$ for all $g \in G \setminus \{1\}$. In that case, we have
$$\Gamma(H_\R)' \cap (\Gamma(H_\R)\dpr \rtimes_\pi G) = \Gamma(H_\R)' \cap \Gamma(H_\R)\dpr = \C$$
and so $\Gamma(H_\R)\dpr \rtimes_\pi G$ is a ${\rm II_1}$ factor.

\section{The asymptotic orthogonality property}

We refer to \cite[Appendix A]{BO} for a brief account on ultrafilters and ultraproducts of tracial von Neumann algebras.

\begin{df}[\cite{popa-amenable}]\label{AOP}
Let $(M, \tau)$ be a tracial von Neumann algebra. We say that a von Neumann subalgebra $A \subset M$ has the {\em asymptotic orthogonality property} if there exists a free ultrafilter $\omega$ on $\N$ such that for all $x, y \in (M^\omega \ominus A^\omega) \cap A'$ and all $a, b \in M \ominus A$, the vectors $ax$ and $yb$ are orthogonal in $\LL^2(M^\omega, \tau_\omega)$.
\end{df}

Popa proved in \cite[Lemma 2.1]{popa-amenable} that the generator masa in free group factors satisfies the asymptotic orthogonality property. The main result of this section is the following.

\begin{theo}\label{theorem-AOP}
Let $G$ be an infinite countable abelian group and $\pi : G \to \mathcal O(H_\R)$ a mixing orthogonal representation. Denote by $\Gamma(H_\R)\dpr \rtimes_\pi G$ the crossed product von Neumann algebra. Then $\LL(G) \subset \Gamma(H_\R)\dpr \rtimes_\pi G$ has the asymptotic orthogonality property.
\end{theo}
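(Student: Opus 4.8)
The plan is to verify Definition~\ref{AOP} directly. Write $N = \Gamma(H_\R)\dpr$, $M = N \rtimes_\pi G$ and $A = \LL(G)$, and fix a free ultrafilter $\omega$. Let $x, y \in (M^\omega \ominus A^\omega) \cap A'$, represented by bounded sequences $(x_k)_k, (y_k)_k$, and let $a, b \in M \ominus A$. Since the pairing $\langle a \, \cdot \, , \, \cdot \, b\rangle$ is separately $\|\cdot\|_2$-continuous and bilinear in $(a,b)$, and since $\LL^2(M) \ominus \LL^2(A) = \bigoplus_{s \in G} (\LL^2(N) \ominus \C)\, u_s$ is spanned, via Proposition~\ref{wick}(4), by the words $W(e_1 \otimes \cdots \otimes e_n) u_s$ with $n \geq 1$, I may assume $a = W(\alpha) u_g$ and $b = W(\beta) u_h$ where $\alpha, \beta$ are tensors of length $\geq 1$. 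The goal becomes $\lim_{k \to \omega} \langle a x_k, y_k b\rangle = 0$.

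First I would extract the structural content of the hypotheses on $x$ and $y$. Expanding $x_k = \sum_{s \in G} x_k^{(s)} u_s$ with $x_k^{(s)} = E_N(x_k u_s^*) \in N$, the relation $x \in A'$ (an exact commutation with each $u_t$ inside $M^\omega$) forces, using that $G$ is abelian, $\lim_{k \to \omega} \sum_s \|\sigma_t^\pi(x_k^{(s)}) - x_k^{(s)}\|_2^2 = 0$ for every $t \in G$, while $E_{A^\omega}(x) = 0$ gives $\lim_{k \to \omega} \sum_s |\tau(x_k^{(s)})|^2 = 0$; the same holds for $y$. I also record the Bessel identity $\sum_s \|x_k^{(s)}\|_2^2 = \|x_k\|_2^2$, which stays bounded. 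A direct computation using $u_g W(w) = \sigma_g^\pi(W(w)) u_g$ then expands the inner product as
$$\langle a x_k, y_k b\rangle = \sum_{s \in G} \big\langle W(\alpha)\, \sigma_g^\pi(x_k^{(s)}),\; y_k^{(gsh^{-1})}\, W(\pi(gsh^{-1})\beta) \big\rangle_{\LL^2(N)}.$$

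The core of the argument is then to control this sum over $G$. The left entry of each pairing is a vector whose leading Fock component begins with the fixed tensor $\alpha$, whereas, by the Wick formula (Proposition~\ref{wick}(1)--(2)), the right entry is a vector whose leading component ends with $\pi(gsh^{-1})\beta$. Because $\pi$ is mixing, as $s \to \infty$ in $G$ the legs of $\pi(gsh^{-1})\beta$ escape to infinity in $H$ and become orthogonal both to the fixed legs of $\alpha$ and to the corresponding legs for other values of $s$; this is exactly the place where the mixing (Rajchman) hypothesis is indispensable. I would turn this into quantitative $\varepsilon$-orthogonality of the relevant closed subspaces of $\LL^2(N)$ indexed by $s$, with $\varepsilon$ shrinking as $s$ leaves a finite set $F$, and feed a dyadic grouping of these subspaces into Proposition~\ref{projections}. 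Combined with the bounded Bessel mass $\sum_s \|x_k^{(s)}\|_2^2$, this bounds the tail $\sum_{s \notin F}$ by an arbitrarily small quantity, uniformly in $k$. For the finite part $\sum_{s \in F}$, the tensors $\pi(gsh^{-1})\beta$ are fixed, and I would show each term tends to $0$ along $\omega$: the sequences $\sigma_g^\pi(x_k^{(s)})$ and $y_k^{(gsh^{-1})}$ define $\sigma^\pi$-invariant, trace-zero vectors in $\LL^2(N)^\omega$, which (averaging over a F\o lner sequence of the abelian group $G$ and using ergodicity of the mixing action) are orthogonal to every fixed vector of $\LL^2(N)$; pairing such vectors after left/right multiplication by the fixed words $W(\alpha)$, $W(\pi(gsh^{-1})\beta)$ and pushing the legs of those words to infinity by a further $G$-averaging then forces the limit to vanish.

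The main obstacle I anticipate is precisely this control of the sum over $G$: converting the qualitative mixing of $\pi$ into $\varepsilon$-orthogonality estimates that are uniform enough, across the infinitely many group (and Fock) directions carried by the ultrapower coefficients $x_k^{(s)}, y_k^{(s)}$, to be summable through Proposition~\ref{projections}, while simultaneously treating the finite exceptional set $F$ by the trace-zero and asymptotic-invariance conditions. Note that the latter terms are \emph{not} killed by soft arguments alone (the invariant vectors land in $\LL^2(N)^\omega \ominus \LL^2(N)$, which is not self-orthogonal, and the mean ergodic theorem only closes tautologically), so genuine free-probabilistic input is unavoidable. All of this rests on a careful Wick-formula bookkeeping of which tensor legs can contract; the free Gaussian structure guarantees that only matchings compatible with the leg patterns of $\alpha$ and $\beta$ survive, and mixing ensures those matchings carry vanishing weight as the group variable runs to infinity.
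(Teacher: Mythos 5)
Your setup (the Fourier expansion over $G$, the identity $\|u_t x_k u_t^* - x_k\|_2^2 = \sum_s \|\sigma^\pi_t(x_k^{(s)}) - x_k^{(s)}\|_2^2$ coming from abelianness, and the expansion of $\langle a x_k, y_k b\rangle$ as a sum over $s$) is correct, but the core of your argument has a genuine gap: the dichotomy ``tail over $s \notin F$ killed by mixing, finite part killed by ergodicity'' misidentifies where the difficulty sits. For the tail, mixing only controls the Wick contractions that pair legs of $\alpha$ against legs of $\pi(gsh^{-1})\beta$. It says nothing about the contractions pairing legs of $\alpha$ against legs of the unknown coefficient $y_k^{(gsh^{-1})}$, or legs of $\pi(gsh^{-1})\beta$ against legs of $\sigma^\pi_g(x_k^{(s)})$; for a \emph{fixed} $k$ these coefficients are arbitrary elements of $N \ominus \C$ (the $A'$-condition is only an assertion along $\omega$), so one can place the Fock-space mass of $x_k^{(s)}$ on words ending in $\pi(gsh^{-1})K$ and that of $y_k^{(gsh^{-1})}$ on words beginning in $K$, producing terms of full size $\|x_k^{(s)}\|_2\|y_k^{(gsh^{-1})}\|_2$ no matter how far $s$ is from $F$. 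Hence no tail bound ``uniform in $k$'' can come from mixing plus the Bessel mass, and your intended application of Proposition~\ref{projections} to subspaces indexed by the group variable $s$ of the Fourier decomposition is aimed at the wrong family of subspaces. For the finite part, weak convergence to $0$ of $\sigma^\pi_g(x_k^{(s)})\Omega$ and of $y_k^{(gsh^{-1})}\Omega$ (which does follow from asymptotic invariance plus ergodicity) does not make their pairing vanish: both entries of the inner product vary with $k$, and as you yourself note the space of such limits is not self-orthogonal. The ``further $G$-averaging'' you invoke does not resolve this.

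The missing idea is the one that actually carries the paper's proof: use the almost-invariance of each coefficient $x_k^{(s)}$ under the Bogoljubov action, together with mixing, to show that along $\omega$ its Fock-space component concentrates on words whose \emph{first letter} is orthogonal to $K$ and whose \emph{last letter} is orthogonal to $\pi(\cdot)K$. Concretely, for each Fourier index one isolates the ``bad'' subspace $\mathcal X$ of words beginning in $K$ or ending in a translate of $K$, observes that the Koopman operators $\rho(g)$, for $g$ ranging over a set of $2^k$ elements whose pairwise differences lie outside a mixing-determined finite set, move $\mathcal X$ into pairwise $\varepsilon$-orthogonal positions (uniformly in the Fourier index, using abelianness), and applies Proposition~\ref{projections} to these $2^k$ \emph{translates of the bad subspace} to conclude that an almost-$\rho(g)$-invariant vector has mass $\lesssim 2^{-k}$ in $\mathcal X$. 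Once the coefficients are known to live (asymptotically) on words with first letter in $K^\perp$ and last letter in $(\pi(\cdot)K)^\perp$, left multiplication by $a$ prepends letters from $K$ and $\mathcal J b^* \mathcal J$ appends letters, so the resulting vectors are \emph{exactly} orthogonal by comparing first letters --- no estimate needed. The only surviving term is the length-one-word component, where the varying vector is finally paired against a \emph{fixed} vector $\pi(h)\eta_t$, and there (and only there) weak convergence from ergodicity plus mixing closes the argument. Your proposal gestures at several of these ingredients but does not assemble them, and as written neither half of your main estimate goes through.
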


\begin{proof}
We denote by $H = H_\R \otimes_\R \C$ the complexification of $H_\R$ and $\mathcal H = \mathcal F(H)$ the full Fock space of $H$. The conjugation on $H$ is simply denoted by $e \mapsto \overline e$. We still denote by $\pi : G \to \mathcal U(H)$ the corresponding unitary representation. Observe that $\pi(g)\overline e = \overline{\pi(g)e}$ for all $g \in G$ and all $e \in H$. Let $\sigma : G \curvearrowright \Gamma(H_\R)\dpr$ be the free Bogoljubov action associated with $\pi$ and $\rho : G \to \mathcal U(\mathcal H)$ the Koopman representation of the action $\sigma$. Observe that 
$$\rho(g) = \id_{\C \Omega} \oplus \bigoplus_{n \geq 1} \pi(g)^{\otimes n}, \forall g \in G.$$ 
Put $Q = \Gamma(H_\R)\dpr$ and $M = \Gamma(H_\R)\dpr \rtimes_\pi G$. We will always identify the GNS-Hilbert space $\LL^2(Q)$ with the full Fock space $\mathcal H$ via the unitary operator
$$U : \LL^2(Q) \to \mathcal H : 
\left\{ 
{\begin{array}{l} 1 \mapsto \Omega \\ 
W(e_1 \otimes \cdots \otimes e_n) \mapsto e_1 \otimes \cdots \otimes e_n.
\end{array}} \right.
$$
We denote by $J : \mathcal H \to \mathcal H$ the canonical conjugation
$$J \Omega = \Omega \; \mbox{ and } \; J(e_1 \otimes \cdots \otimes e_n) = \overline e_n \otimes \cdots \otimes \overline e_1.$$
We will identify $\LL^2(M)$ with $\mathcal H \otimes \ell^2(G)$ via the unitary operator $\LL^2(M) \ni a u_h \mapsto U(a) \otimes \delta_h \in \mathcal H \otimes \ell^2(G)$. We denote by $\mathcal J : \mathcal H \otimes \ell^2(G) \to \mathcal H \otimes \ell^2(G)$ the conjugation
$\mathcal J (\xi \otimes \delta_g) = J \rho(g^{-1})\xi \otimes \delta_{g^{-1}}$.

With a proof that is very similar to \cite[Lemma 2.1]{popa-amenable}, we will reach the conclusion of Theorem $\ref{theorem-AOP}$. We fix once and for all a free ultrafilter $\omega$ on $\N$. We want to show that $a x \perp y b$ for all $x, y \in (M^\omega \ominus \LL(G)^\omega) \cap \LL(G)'$ and all $a, b \in M \ominus \LL(G)$. Note that since $G$ is abelian, we have $x u_g \in (M^\omega \ominus \LL(G)^\omega) \cap \LL(G)'$ whenever $x \in (M^\omega \ominus \LL(G)^\omega) \cap \LL(G)'$. So, using Kaplansky density theorem, it suffices to show that $a x \perp y b$ for all $x, y \in (M^\omega \ominus \LL(G)^\omega) \cap \LL(G)'$ and $a, b$ of the form $a = W(\xi_1 \otimes \cdots \otimes \xi_s)$, $b = W(\eta_1 \otimes \cdots \otimes \eta_t)$.

From now on, we fix $a = W(\xi_1 \otimes \cdots \otimes \xi_s)$ and $b = W(\eta_1 \otimes \cdots \otimes \eta_t)$. Denote by $K \subset H$ the finite dimensional subspace generated by $\xi_i$ and  $\eta_j$ and write $r = \dim K$. We will further assume that $\|\xi_i\| = \|\eta_j\| = 1$ for all $i, j$ and that $K = \overline K$. 

Fix $h \in G$. Denote by $\mathcal X_h \subset \mathcal H \ominus (\C \Omega \oplus H)$ the closed linear span of all the words $e_1 \otimes \cdots \otimes e_n$ where $n \geq 2$ and such that the first letter $e_1$ belongs to $K$ or the last letter $e_n$ belongs to $\pi(h)K$. Using the above identification, we can then split $\mathcal X_h$ as an orthogonal sum $\mathcal X_h = \mathcal X_h^1 \oplus \mathcal X_h^2 \oplus \mathcal X_h^3$ such that
$$
\begin{aligned}
\mathcal X_h^1 & = K \otimes \mathcal H \otimes \pi(h)K \\
\mathcal X_h^2 & = K \otimes \mathcal H \otimes (\pi(h)K)^\perp \\
\mathcal X_h^3 & = K^\perp \otimes \mathcal H \otimes \pi(h)K.
\end{aligned}
$$
Likewise, denote by $\mathcal Y_h \subset \mathcal H \ominus (\C \Omega \oplus H)$ the closed linear span of all the words $e_1 \otimes \cdots \otimes e_n$ where $n \geq 2$ and such that the first letter $e_1$ belongs to $K^\perp$ and the last letter $e_n$ belongs to $(\pi(h)K)^\perp$, that is, $\mathcal Y_h = K^\perp \otimes \mathcal H \otimes (\pi(h)K)^\perp$. Therefore we have 
$$\mathcal H \ominus \C \Omega = K \oplus K^\perp \oplus \mathcal X_h \oplus \mathcal Y_h = K \oplus K^\perp \oplus \mathcal X_h^1 \oplus \mathcal X_h^2 \oplus \mathcal X_h^3 \oplus \mathcal Y_h.$$

\begin{claim}
For every $\varepsilon > 0$, there exists a finite subset $\mathcal F_\varepsilon \subset G$ such that 
$$\rho(g)K \perp_\varepsilon K \; \mbox{ and } \; \rho(g) \mathcal X_h^i \perp_\varepsilon \mathcal X_h^i$$
for all $g \in G \setminus \mathcal F_\varepsilon$, all $i \in \{1, 2, 3\}$ and all $h \in G$.
\end{claim}

\begin{proof}[Proof of the Claim]
Fix $\varepsilon > 0$. Let $\zeta_1, \dots, \zeta_r$ be an orthonormal basis of $K$. Since $\pi$ is a mixing representation, there exists a finite subset $\mathcal F_\varepsilon \subset G$ such that $|\langle \pi(g)\zeta_i , \zeta_j \rangle| \leq \varepsilon / r$ for all $g \in G \setminus \mathcal F_\varepsilon$ and all $i , j \in \{1, \dots, r\}$. Observe that since $G$ is abelian, we also have 
\begin{equation}\label{equation-abelian}
| \langle \pi(g)\pi(h)\zeta_i , \pi(h)\zeta_j \rangle | \leq \frac{\varepsilon}{r}, \forall g \in G \setminus \mathcal F_\varepsilon, \forall h \in G, \forall i, j \in \{1, \dots, r\}. 
\end{equation}

Let $\xi, \eta \in K \otimes \mathcal H$ that we write $\xi = \sum_{i = 1}^r \zeta_i \otimes e_i$ and $\eta = \sum_{j = 1}^r \zeta_j \otimes f_j$, with $e_i, f_j \in \mathcal H$. Note that $\|\xi\|^2 = \sum_{i = 1}^r \|e_i\|^2$ and $\|\eta\|^2 = \sum_{j = 1}^r \|f_j\|^2$. We have $\rho(g)\xi = \sum_{i = 1}^r \pi(g)\zeta_i \otimes \rho(g)e_i$. Thus, for all $g \in G \setminus \mathcal F_\varepsilon$, using Cauchy-Schwarz inequality, we get
$$
|\langle \rho(g) \xi, \eta \rangle| \leq \sum_{i, j = 1}^r |\langle \pi(g) \zeta_i, \zeta_j\rangle| |\langle \rho(g) e_i, f_j \rangle| \leq \frac{\varepsilon}{r} \sum_{i, j = 1}^r \|e_i\| \|f_j\| \leq \varepsilon \|\xi\| \|\eta\|.
$$
So, we obtain $\rho(g)(K \otimes \mathcal H) \perp_\varepsilon K \otimes \mathcal H$ and, in particular, $\rho(g)K \perp_\varepsilon K$, $\rho(g) \mathcal X_h^1 \perp_\varepsilon \mathcal X_h^1$, $\rho(g) \mathcal X_h^2 \perp_\varepsilon \mathcal X_h^2$ for all $g \in G \setminus \mathcal F_\varepsilon$ and all $h \in G$.

Likewise, using Inequality $(\ref{equation-abelian})$, we obtain $\rho(g) (\mathcal H \otimes \pi(h)K) \perp_\varepsilon \mathcal H \otimes \pi(h)K$ and, in particular, $\rho(g) \mathcal X_h^3 \perp_\varepsilon \mathcal X_h^3$ for all $g \in G \setminus \mathcal F_\varepsilon$ and all $h \in G$. 
\end{proof}

Let $x \in (M^\omega \ominus \LL(G)^\omega) \cap \LL(G)'$. We may and will always represent $x$ by a sequence $(x_n)$ such that $\sup_n \|x_n\|_\infty \leq 1$; $x_n \in M \ominus \LL(G)$; and $\lim_{n \to \omega} \|u_g x_n u_g^* - x_n\|_2 = 0$ for all $g \in G$. Write $x_n = \sum_{h \in G} (x_n)^h u_h$ for the Fourier expansion of $x_n$ in $M$ with respect to the crossed product decomposition $M = Q \rtimes G$. Observe that $(x_n)^h \in Q \ominus \C$ for all $n \in \N$ and all $h \in G$. Define subspaces of $(\mathcal H \ominus \C \Omega) \otimes \ell^2(G)$ by $\mathscr X = \bigoplus_{h \in G} (\mathcal X_h \otimes \C \delta_h)$ and $\mathscr Y = \bigoplus_{h \in G} (\mathcal Y_h \otimes \C \delta_h)$. Under the previous identification, we then have
\begin{equation}\label{module}
\LL^2(M \ominus \LL(G)) = (K \otimes \ell^2(G)) \oplus (K^\perp \otimes \ell^2(G)) \oplus \mathscr X \oplus \mathscr Y.
\end{equation}

\begin{step}\label{step1}
For all $x = (x_n) \in (M^\omega \ominus \LL(G)^\omega) \cap \LL(G)'$, we have 
$$\lim_{n \to \omega} \|P_{(K \otimes \ell^2(G)) \oplus \mathscr{X}}(x_n)\|_2 = 0.$$
\end{step}

\begin{proof}[Proof of Step $\ref{step1}$]
We will be using the notation $\mathcal X^0_h := K$ for all $h \in G$. For all $g, h \in G$, all $i \in \{0, 1, 2, 3\}$ and all $n \in \N$, we have
$$
\begin{aligned}
\| P_{\mathcal X_h^i} ((x_n)^h) \|_2^2 & = \|\rho(g) P_{\mathcal{X}_h^i} ((x_n)^h) \|_2^2 \\
& = \|\rho(g) P_{\mathcal X_h^i} ((x_n)^h) - P_{\rho(g) \mathcal{X}_h^i} ((x_n)^h) +  P_{\rho(g) \mathcal{X}_h^i} ((x_n)^h)\|_2^2 \\
& \leq 2\|\rho(g) P_{\mathcal X_h^i} ((x_n)^h) - P_{\rho(g) \mathcal{X}_h^i} ((x_n)^h)\|_2^2 + 2\|P_{\rho(g) \mathcal{X}_h^i} ((x_n)^h)\|_2^2 \\
& = 2\| P_{\rho(g)\mathcal{X}_h^i}( u_g (x_n)^h u_g^* - (x_n)^h) \|_2^2 + 2\|P_{\rho(g) \mathcal{X}_h^i} ((x_n)^h)\|_2^2 \\
& \leq 2\| \sigma_g ((x_n)^h) - (x_n)^h \|_2^2 + 2\|P_{\rho(g) \mathcal{X}_h^i} ((x_n)^h)\|_2^2.
\end{aligned}
$$

Fix $k \geq 1$. Choose $\varepsilon > 0$ very small such that  $\prod_{\ell = 0}^{k - 1} (1 + \delta^{\circ \ell}(\varepsilon))^2 \leq 2$, where $\delta : [0, \frac12) \to \R$ is the function which appeared in Lemma $\ref{4proj}$. Then choose a finite subset $\mathcal F_\varepsilon \subset G$ according to the Claim. Finally, choose a subset $\mathcal G \subset G$ of cardinality $2^k$ with the property that $s^{-1} t \in G \setminus \mathcal F_\varepsilon$ whenever $s, t \in \mathcal G$ such that $s \neq t$. So, we have that $\rho(s)\mathcal X_h^i \perp_\varepsilon \mathcal \rho(t) \mathcal X_h^i$ for all $s, t \in \mathcal G$ such that $s \neq t$, all $i \in \{0, 1, 2, 3\}$ and all $h \in G$. Therefore, using Proposition $\ref{projections}$ and the above inequality, we get
$$
\begin{aligned}
2^k \| P_{\mathcal{X}_h^i} ((x_n)^h) \|_2^2 & =  \sum_{g \in \mathcal{G}} \| \rho(g) P_{\mathcal{X}_h^i} ((x_n)^h) \|_2^2 \\
& \leq  \sum_{g \in \mathcal{G}} \left( 2\| \sigma_g ((x_n)^h) - (x_n)^h \|_2^2 + 2\| P_{\rho(g) \mathcal{X}_h^i} ((x_n)^h) \|_2^2 \right) \\
& =  2 \sum_{g \in \mathcal G} \| \sigma_g ((x_n)^h) - (x_n)^h \|_2^2 + 2\sum_{g \in \mathcal{G}} \| P_{\rho(g) \mathcal{X}_h^i} ((x_n)^h) \|_2^2  \\
& \leq  2 \sum_{g \in \mathcal G} \| \sigma_g ((x_n)^h) - (x_n)^h \|_2^2 + 2 \prod_{\ell = 0}^{k - 1} (1 + \delta^{\circ \ell}(\varepsilon))^2 \|(x_n)^h\|_2^2 \\
& \leq  2 \sum_{g \in \mathcal G} \| \sigma_g ((x_n)^h) - (x_n)^h \|_2^2 + 4 \|(x_n)^h\|_2^2.
\end{aligned}
$$
Finally, since $G$ is abelian, summing up over all $h \in G$ and all $i \in \{0, 1, 2, 3\}$, we get
$$2^k \|P_{(K \otimes \ell^2(G)) \oplus \mathscr X} (x_n)\|_2^2 \leq 8 \sum_{g \in \mathcal G} \|u_g x_n u_g^* - x_n\|_2^2 + 16 \|x_n\|_2^2.$$
This yields $\lim_{n \to \omega} \|P_{(K \otimes \ell^2(G)) \oplus \mathscr X} (x_n)\|_2^2 \leq 2^{4 - k}$. Since this is true for every $k \geq 1$, we finally get $\lim_{n \to \omega} \|P_{(K \otimes \ell^2(G)) \oplus \mathscr X} (x_n)\|_2 = 0$. 
\end{proof}

\begin{step}\label{step2}
We have 
$$a \left( (K^\perp \otimes \ell^2(G)) \oplus \mathscr{Y} \right) \perp \mathcal J b^* \mathcal J \mathscr{Y} \mbox{ and }  a \mathscr{Y} \perp \mathcal J b^* \mathcal J \left( (K^\perp \otimes \ell^2(G)) \oplus \mathscr{Y} \right)$$ in the Hilbert space $\mathcal H \otimes \ell^2(G)$. 
\end{step}

\begin{proof}[Proof of Step $\ref{step2}$]
We first prove that $a \left( (K^\perp \otimes \ell^2(G)) \oplus \mathscr{Y} \right) \perp \mathcal J b^* \mathcal J \mathscr{Y}$. Recall that $a = W(\xi_1 \otimes \cdots \otimes \xi_s)$ and $b = W(\eta_1 \otimes \cdots \otimes \eta_t)$ with $\xi_i, \eta_j \in K$. Using the Fourier decomposition, it suffices to show that for all $h \in G$, $a (K^\perp  \oplus \mathcal Y_h) \perp J \sigma_h(b)^* J \mathcal Y_h$ in the Hilbert space $\mathcal H$. 

Let $e_1 \otimes \cdots \otimes e_m$ be an elementary word in $K^\perp  \oplus \mathcal Y_h$ with $e_1 \in K^\perp$ (possibly $m = 1$). Let $f_1 \otimes \cdots \otimes f_n$ be an elementary word in $\mathcal Y_h$ with $n \geq 2$, $f_1 \in K^\perp$ and $f_n \in (\pi(h)K)^\perp$. Proposition $\ref{wick}$ yields 
$$
\begin{aligned}
a (e_1 \otimes \cdots \otimes e_m) & = \xi_1 \otimes \cdots \otimes \xi_s \otimes e_1 \otimes \cdots \otimes e_m \\
J \sigma_h(b)^* J (f_1 \otimes \cdots \otimes f_n) & = f_1 \otimes \cdots \otimes f_n \otimes \pi(h)\eta_1 \otimes \cdots \otimes \pi(h)\eta_t.
\end{aligned}
$$
Since $\xi_1 \in K$ and $f_1 \in K^\perp$, we get $a (e_1 \otimes \cdots \otimes e_m) \perp J \sigma_h(b)^* J (f_1 \otimes \cdots \otimes f_n)$. This shows that $a (K^\perp  \oplus \mathcal Y_h) \perp J \sigma_h(b)^* J \mathcal Y_h$ in the Hilbert space $\mathcal H$. 

Since $K = \overline K$, $a^*$ and $b^*$ have all their letters in $K$ and the above proof shows that $\mathcal J b \mathcal J \mathscr{Y} \perp a^* \left( (K^\perp \otimes \ell^2(G)) \oplus \mathscr{Y} \right)$. Since $a$ and $\mathcal J b \mathcal J$ commute, we finally obtain that $a \mathscr{Y} \perp \mathcal J b^* \mathcal J \left( (K^\perp \otimes \ell^2(G)) \oplus \mathscr{Y} \right)$.
\end{proof}

\begin{step}\label{step3}
Let $x, y \in (M^\omega \ominus \LL(G)^\omega) \cap \LL(G)'$. Then we have 
$$\lim_{n \to \omega} \langle a P_{K^\perp \otimes \ell^2(G)}(x_n), \mathcal J b^* \mathcal J P_{K^\perp \otimes \ell^2(G)}(y_n) \rangle = 0.$$
\end{step}

\begin{proof}[Proof of Step $\ref{step3}$]
Let $x, y \in (M^\omega \ominus \LL(G)^\omega) \cap \LL(G)'$. Write $P_{K^\perp \otimes \ell^2(G)}(x_n) = \sum_{h \in G} W(e_n^h)\Omega \otimes \delta_h$ and $P_{K^\perp \otimes \ell^2(G)}(y_n) = \sum_{h \in G} W(f_n^h) \Omega \otimes \delta_h$ with $e_n^h, f_n^h \in K^\perp$. Note that $\sum_{h \in G} \|e_n^h\|^2 \leq \|x_n\|_2^2$ and $\sum_{h \in G} \|f_n^h\|^2 \leq \|y_n\|_2^2$. Using Proposition $\ref{wick}$, a simple calculation shows that 
$$
\begin{aligned}
a P_{K^\perp \otimes \ell^2(G)}(x_n) & = \sum_{h \in G} W(\xi_1 \otimes \cdots \otimes \xi_s \otimes e_n^h) \Omega \otimes \delta_h \\
\mathcal J b^* \mathcal J P_{K^\perp \otimes \ell^2(G)}(y_n) & = \sum_{h \in G} W(f_n^h) W(\pi(h)\eta_1 \otimes \cdots \otimes \pi(h)\eta_t) \Omega \otimes \delta_h.
\end{aligned}
$$

Put $A_n := \langle a P_{K^\perp \otimes \ell^2(G)}(x_n), \mathcal J b^* \mathcal J P_{K^\perp \otimes \ell^2(G)}(y_n) \rangle$. Therefore, using again Proposition $\ref{wick}$, we obtain
$$
\begin{aligned}
A_n & = \sum_{h \in G} \langle W(\xi_1 \otimes \cdots \otimes \xi_s \otimes e_n^h)\Omega,  W(f_n^h) W(\pi(h)\eta_1 \otimes \cdots \otimes \pi(h)\eta_t)\Omega \rangle \\
& = \sum_{h \in G} \langle W(f_n^h)^* W(\xi_1 \otimes \cdots \otimes \xi_s \otimes e_n^h)\Omega, W(\pi(h)\eta_1 \otimes \cdots \otimes \pi(h)\eta_t)\Omega \rangle \\
& = \sum_{h \in G} \langle W(\overline f_n^h \otimes \xi_1 \otimes \cdots \otimes \xi_s \otimes e_n^h)\Omega, W(\pi(h)\eta_1 \otimes \cdots \otimes \pi(h)\eta_t)\Omega \rangle \\
& = \sum_{h \in G} \langle \overline f_n^h \otimes \xi_1 \otimes \cdots \otimes \xi_s \otimes e_n^h, \pi(h)\eta_1 \otimes \cdots \otimes \pi(h)\eta_t \rangle
\end{aligned}
$$
Note that if $t \neq s + 2$, then $A_n = 0$ for all $n \in \N$, whence $\lim_{n \to \omega} A_n = 0$.

Next, assume that $t = s + 2$ and fix $\varepsilon > 0$. Since $\pi$ is mixing, there exists a finite subset $\mathcal F \subset G$ such that for all $h \in G \setminus \mathcal F$, we have $|\langle \xi_1, \pi(h)\eta_2 \rangle| \leq \varepsilon$. So, for all $h \in G \setminus \mathcal F$, we have
$$| \langle \overline f_n^h \otimes \xi_1 \otimes \cdots \otimes \xi_s \otimes e_n^h, \pi(h)\eta_1 \otimes \cdots \otimes \pi(h)\eta_t \rangle | \leq \varepsilon \|e_n^h\| \|f_n^h\|.$$
By the Cauchy-Schwarz inequality, for all $n \in \N$, we have
$$\sum_{h \in G \setminus \mathcal F} | \langle \overline f_n^h \otimes \xi_1 \otimes \cdots \otimes \xi_s \otimes e_n^h, \pi(h)\eta_1 \otimes \cdots \otimes \pi(h)\eta_t \rangle | \leq \varepsilon \|x_n\|_2 \| y_n \|_2 \leq \varepsilon.$$
For all $g \in G$, since $\rho(g)(H \otimes \ell^2(G)) = H \otimes \ell^2(G)$, we have
$$
\begin{aligned}
\sum_{h \in G} (\pi(g)e_n^h - e_n^h) \otimes \delta_h & = \rho(g)P_{K^\perp \otimes \ell^2(G)}(x_n) - P_{K^\perp \otimes \ell^2(G)}(x_n) \\
& = P_{H \otimes \ell^2(G)}(u_g x_n u_g^* - x_n ) + (1 - \rho(g))P_{K \otimes \ell^2(G)}(x_n).
\end{aligned}
$$
Using the fact that $x, y \in (M^\omega \ominus \LL(G)^\omega) \cap \LL(G)'$ together with Step $\ref{step1}$, we get that for all $g, h \in G$, $\lim_{n \to \omega} \|\pi(g)e_n^h - e_n^h\| = 0$ and $\lim_{n \to \omega} \|\pi(g)f_n^h - f_n^h\| = 0$. Since $\pi$ is mixing and thus ergodic, we get that $e_n^h \to 0$ and $f_n^h \to 0$ weakly in $H$ as $n \to \omega$ for all $h \in G$. Since $\mathcal F$ is finite, this implies 
$$\lim_{n \to \omega} \sum_{h \in \mathcal F} | \langle \overline f_n^h \otimes \xi_1 \otimes \cdots \otimes \xi_s \otimes e_n^h, \pi(h)\eta_1 \otimes \cdots \otimes \pi(h)\eta_t \rangle | = 0.$$
Thus we have $\lim_{n \to \omega} | A_n | \leq \varepsilon$. Since this is true for every $\varepsilon > 0$, we get $\lim_{n \to \omega} A_n = 0$.
\end{proof}

Let $x, y \in (M^\omega \ominus \LL(G)^\omega) \cap \LL(G)'$. By combining Steps $\ref{step2}$ and $\ref{step3}$, we obtain 
$$\lim_{n \to \omega} \langle a P_{(K^\perp \otimes \ell^2(G)) \oplus \mathscr{Y}}(x_n), \mathcal J b^* \mathcal J P_{(K^\perp \otimes \ell^2(G)) \oplus \mathscr{Y}}(y_n) \rangle = 0.$$
Moreover, Step $\ref{step1}$ yields
$$\lim_{n \to \omega} \|P_{(K \otimes \ell^2(G)) \oplus \mathscr{X}}(x_n)\|_2 = 0 \mbox{ and } \lim_{n \to \omega} \|P_{(K \otimes \ell^2(G)) \oplus \mathscr{X}}(y_n)\|_2 = 0.$$
Therefore, thanks to Equality $(\ref{module})$, we finally get
$$\langle a x, y b\rangle_{\LL^2(M^\omega)} = \lim_{n \to \omega} \langle a x_n, y_n b \rangle_{\LL^2(M)} = \lim_{n \to \omega} \langle a x_n, \mathcal J b^* \mathcal J y_n \rangle_{\LL^2(M)} = 0.$$
As we mentioned before, this finishes the proof.
\end{proof}

\section{Proof of the Theorem and the Corollary}\label{proof}

We prove a stronger version of the main Theorem.

\begin{theo}
Let $G$ be a countable infinite abelian group and $\pi : G \to \mathcal O(H_\R)$ a faithful mixing orthogonal representation. Then for any intermediate von Neumann subalgebra $\LL(G) \subset P \subset \Gamma(H_\R)\dpr \rtimes_\pi G$, there exist pairwise orthogonal projections $p_n \in \mathcal Z(P)$ with $\sum_{n \geq 0} p_n = 1$ such that
\begin{itemize}
\item $P p_0 = \LL(G) p_0$ and
\item $P p_n$ is a non-Gamma ${\rm II_1}$ factor for all $n \geq 1$.
\end{itemize} 
\end{theo}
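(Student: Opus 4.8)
The proof will rest on two properties of the inclusion $A := \LL(G) \subset M := \Gamma(H_\R)\dpr \rtimes_\pi G$. The first is the asymptotic orthogonality property (Theorem $\ref{theorem-AOP}$). The second is that, because $\pi$ (equivalently $\sigma^\pi$) is mixing, $A$ is a \emph{mixing} masa: writing $Q = \Gamma(H_\R)\dpr$, a Fourier computation in $M = Q \rtimes G$ based on $\lim_{g\to\infty}\tau_Q(\sigma_g(x)y)=0$ shows that $\|E_A(a u_k b) - E_A(a)\,u_k\,E_A(b)\|_2 \to 0$ for all $a,b \in M$ whenever $u_k \in (A)_1$ tends to $0$ weakly. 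I first record the reductions. Since $A \subset P$, we have $\mathcal Z(P) \subset P' \cap M \subset A' \cap M = A$, so all projections sought lie in $A$. Let $p_0 \in \mathcal Z(P)$ be the largest central projection with $P p_0$ abelian (such a maximum exists, the relevant family being closed under suprema); as $A p_0$ is a masa in $p_0 M p_0$ and a masa is maximal abelian, $A p_0 \subset P p_0$ forces $P p_0 = A p_0$, giving the first bullet. With $q_0 = 1 - p_0$, maximality of $p_0$ says $P q_0$ has no abelian central summand, and it remains to prove that $\mathcal Z(P) q_0$ is atomic, say $\bigoplus_{n\ge 1}\C p_n$, and that each factor $P p_n$ (which is ${\rm II_1}$, as it contains the diffuse algebra $A p_n$) is non-Gamma.

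The role of asymptotic orthogonality is captured by the following localization principle: if $z \in P^\omega \cap P'$ satisfies $z \notin A^\omega$, then there is a nonzero $q \in \mathcal Z(P)$ with $P q = A q$. To see this, set $z_0 = z - E_{A^\omega}(z) \ne 0$; since $z \in P' \subset A'$ we have $z_0 \in (M^\omega \ominus A^\omega) \cap A'$, so Theorem $\ref{theorem-AOP}$ applied with $x = y = z_0$ and $a = b \in P \ominus A$ gives $\langle a z_0, z_0 a\rangle = 0$. As $z_0$ commutes with $P$ this reads $\|a z_0\|_{2,\omega} = 0$, hence $a z_0 = 0$ for all $a \in P \ominus A$. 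Consequently the support projection $e \in P^\omega \cap P'$ of $h := z_0^* z_0$ annihilates $P \ominus A$. The functional $a \mapsto \tau_\omega(e a)$ is a normal positive trace on $P$ dominated by $\tau$, hence equal to $\tau(f\,\cdot\,)$ for some nonzero $f \in \mathcal Z(P)_+$; for $a \in P \ominus A$ one gets $\tau(f a^* a) = \tau_\omega(e a^* a) = \|a e\|_{2,\omega}^2 = 0$, so $a\,q = 0$ with $q := \supp(f)$. Thus $P q = A q$, as claimed (and $q$ lies under the central support of $z$).

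Atomicity of $\mathcal Z(P) q_0$ now follows from the mixing property, independently of $P$. If $\mathcal Z(P) q_0$ had a diffuse part, carried by a central projection $z \le q_0$, then $D := \mathcal Z(P) z$ would be a diffuse subalgebra of the masa $A z$. Choosing $u_k \in \mathcal U(D) \subset (A)_1$ with $u_k \to 0$ weakly and any $b \in P z \ominus A z$, the identity $u_k b u_k^* = b$ combined with the mixing-masa property yields
\[
\|b\|_2^2 = \tau(b^* u_k b u_k^*) = \tau\!\big(E_A(b^* u_k b)\, u_k^*\big) \longrightarrow \tau\!\big(E_A(b^*)\,E_A(b)\big) = 0,
\]
so $P z = A z$ is abelian, contradicting the maximality of $p_0$. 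Hence $\mathcal Z(P) q_0 = \bigoplus_{n\ge 1}\C p_n$ and $P q_0 = \bigoplus_{n\ge 1} P p_n$ with each $P p_n$ a ${\rm II_1}$ factor.

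It remains to rule out property Gamma for each $P p_n$. If $P p_n$ had Gamma, then $\mathcal C := (P p_n)^\omega \cap (P p_n)'$ would strictly contain $\C p_n$; by the localization principle (and minimality of $p_n$, with $P p_n$ non-abelian) no element of $\mathcal C$ can leave $A^\omega$, so $\mathcal C$ is a diffuse abelian subalgebra of $(A p_n)^\omega$ commuting with $P p_n$. Pick a Haar unitary $v \in \mathcal C$ with representatives $v_k \in \mathcal U(A p_n)$. The weak limit of $(v_k)$ along $\omega$ lies in $A p_n$ and commutes with $P p_n$ (since $\|[v_k,b]\|_2\to 0$ for $b \in P p_n$), hence belongs to $(P p_n)' \cap A p_n = \C p_n$ (a masa in a factor); as its trace equals $\tau_\omega(v) = 0$, it is $0$, so $v_k \to 0$ weakly. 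For any $b \in P p_n \ominus A p_n$ we then have $v_k b v_k^* \to b$ in $\|\cdot\|_2$, while the mixing-masa property gives $\langle v_k b v_k^*, b\rangle = \tau(b^* v_k b v_k^*) \to \tau(E_A(b^*) E_A(b)) = 0$, forcing $\|b\|_2 = 0$, a contradiction. Hence every $P p_n$ is non-Gamma, completing the proof. The crux — and the main obstacle — is precisely this final coordination: asymptotic orthogonality is needed to confine the central sequences of each non-abelian piece to $A^\omega$, after which one must use the mixing of $\sigma$, in the ultraproduct-stable form of the weak asymptotic homomorphism property together with the extraction of a weakly null Haar unitary, to destroy them; setting up the mixing-masa property in this form is the key technical input.
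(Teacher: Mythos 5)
Your architecture genuinely departs from the paper's. The paper takes $p_0$ to be the largest projection with $Pp_0$ \emph{amenable}, derives both the atomicity of $\mathcal Z(P)(1-p_0)$ and the absence of property Gamma for the factor summands from the (strong) solidity of $M$ established in \cite{houdayer-shlyakhtenko} together with \cite[Proposition 7]{ozawa}, and uses the asymptotic orthogonality property only to prove $Pp_0 = \LL(G)p_0$, following \cite[Corollary 2.3]{CFRW}. Your substitution of the mixing-masa (weak asymptotic homomorphism) property for solidity works for the atomicity step: that part of your argument is correct, standard to justify for a crossed product by a mixing action, and arguably more elementary than invoking solidity. The problem is your ``localization principle,'' on which the entire non-Gamma part rests.

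Concretely, you set $z_0 = z - E_{A^\omega}(z)$ for $z \in P^\omega \cap P'$ and then assert ``as $z_0$ commutes with $P$.'' It does not. The element $z$ commutes with $P$, and $z_0$ commutes with $A$ (because $E_{A^\omega}$ is $A^\omega$-bimodular), but $E_{A^\omega}(z)$ has no reason to commute with $P$: $E_{A^\omega}$ is not $P$-bimodular, so $pz = zp$ does not yield $p\,E_{A^\omega}(z) = E_{A^\omega}(z)\,p$. Hence the AOP only gives $\langle a z_0, z_0 a\rangle = 0$, which is \emph{not} $\|a z_0\|_2^2$; writing $w = E_{A^\omega}(z)$ one has $a z_0 = z_0 a + [w,a]$, and the orthogonality relation only produces the estimate $\|z_0 a\|_2 \leq \|[w,a]\|_2$, which is vacuous without control on that commutator. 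The same defect recurs when you place the support projection of $z_0^* z_0$ in $P^\omega \cap P'$. This is exactly the point where \cite{CFRW} and the paper use amenability of the summand: hyperfiniteness plus \cite[Corollary 2.3]{popa-malleable1} produce a unitary $u \in (Pq)' \cap (Pq)^\omega$ satisfying $E_{A^\omega}(u) = 0$ \emph{by construction}, so that $u$ simultaneously commutes with $Pq$ and is orthogonal to $A^\omega$ --- the two properties your $z_0$ cannot be made to share. Without this, the confinement of $(Pp_n)^\omega \cap (Pp_n)'$ inside $(A p_n)^\omega$ is not established, and your final (otherwise correct) argument with a weakly null Haar unitary and the mixing-masa property has nothing to act on. To salvage the non-Gamma claim you need either a new idea for central sequences that are not orthogonal to $A^\omega$, or, as the paper does, the solidity of $M$.
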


\begin{proof}
Put $A = \LL(G)$ and $M = \Gamma(H_\R)'' \rtimes G$. Since $A \subset M$ is a masa, we have $\mathcal Z(P) \subset A$. Denote by $p \in \mathcal Z(P)$ the maximal projection such that $P p$ is amenable. Then $P(1 - p)$ has no amenable direct summand. 

By \cite[Theorem 3.10 and Theorem 5.1]{houdayer-shlyakhtenko}, $M$ is a strongly solid ${\rm II_1}$ factor and in particular {\em solid} in the sense of \cite{ozawa}, that is, the relative commutant of any diffuse subalgebra of $M$ must be amenable. (The strong solidity result \cite[Theorem 3.10]{houdayer-shlyakhtenko} is only stated for mixing orthogonal representations of $\Z$ but the same proof works for any countable infinite abelian group $G$ as well). Since $P(1 - p)$ has no amenable direct summand and is solid, we get that its center $\mathcal Z(P(1 - p)) = \mathcal Z(P) (1 - p)$ is purely atomic. Denote by $p_n$, $n \geq 1$, the minimal projections of $\mathcal Z(P)(1 - p)$. For every $n \geq 1$, since the ${\rm II_1}$ factor $P p_n$ is solid and nonamenable, it does not have property Gamma by \cite[Proposition 7]{ozawa}.

It remains to prove that $A p = P p$. The rest of the proof is now identical to the one of \cite[Corollary 2.3]{CFRW}, but we nevertheless give a detailed proof for the sake of completeness. We first show that $P p$ is of type ${\rm I}$. Indeed, assume by contradiction that there exists a nonzero projection $q \in \mathcal Z(P) p$ such that $Pq$ is of type ${\rm II_1}$. Since $P q$ is hyperfinite by Connes' result \cite{connes76}, we may find an increasing sequence $Q_k \subset Pq$ of finite dimensional unital $\ast$-subalgebras such that $\bigvee_{k \geq 1} Q_k = Pq$. Since $Q_k' \cap Pq$ is of type ${\rm II_1}$ and $A$ is abelian, \cite[Corollary 2.3]{popa-malleable1} yields a unitary $u_k \in \mathcal U(Q_k' \cap Pq)$ such that $\|E_{A}(u_k)\|_2 \leq \frac1k$ for all $k \geq 1$. Therefore, the sequence $(u_k)$ represents a unitary $u \in \mathcal U((Pq)' \cap (Pq)^\omega)$ such that $E_{A^\omega}(u) = 0$. Since $A q \subset P q$ is a masa in a type ${\rm II_1}$ von Neumann algebra, we may find a unitary $v \in \mathcal U(P q)$ such that $E_A(v) = E_{A q}(v) = 0$. By Theorem $\ref{theorem-AOP}$, we get that $v u$ and $u v$ are orthogonal in $\LL^2(M^\omega, \tau_\omega)$. Since moreover $v u = u v$, we obtain $u v = 0$, whence $q = (u v)^* (uv) = 0$. This is a contradiction. Therefore $P p$ is of type ${\rm I}$. Since $A \subset P p \oplus A (1 - p)$ is a masa in a finite type ${\rm I}$ von Neumann algebra, we have that $A$ is regular inside $P p \oplus A(1 - p)$ by \cite[Theorem 3.19]{kadison}. By singularity of $A$, we get $A = P p \oplus A(1 - p)$ and so $A p = P p$.
\end{proof}

\begin{proof}[Proof of the Corollary]
The proof is very similar to the one of \cite[Theorem 5.7]{kechris}. For $\mu \in \Prob(\mathbf T)$ a Borel probability measure on $\mathbf T$, we use the notation $\mu^\infty = \sum_{n \geq 1}\frac{1}{2^n}  \mu^{\ast n}$. Write $\supp(\mu)$ for the {\em topological support} of $\mu$, that is, 
$$\supp(\mu) = \bigcap \{F \subset \mathbf T \mbox{ closed subset} : \mu(F) = 1\}.$$
We have $\supp(\mu \ast \nu) \subset \supp(\mu) \supp(\nu)$ for all $\mu, \nu \in \Prob(\mathbf T)$. Define the real Hilbert space 
$$H_\R^\mu = \{ \zeta \in \LL^2(\mathbf T, \mu) : \overline{\zeta(z)} = \zeta(\overline z) \; \mu\mbox{-almost everywhere}\}$$ 
and the orthogonal representation
$$\pi^\mu : \Z \to \mathcal O(H_\R^\mu) : \left( \pi^\mu(n) \zeta \right)(z) = z^n \zeta(z).$$
Observe that the complexification of $H_\R^\mu$ is simply $\LL^2(\mathbf T, \mu)$. The corresponding unitary representation on $\LL^2(\mathbf T, \mu)$ will still be denoted by $\pi^\mu$.

Using a combination of \cite[${\rm VIII}$, 3, Th\'eor\`eme ${\rm II}$]{kahane-salem} and \cite[${\rm VII}$, 1, Theorem 7]{kechris-louveau}, there exists a closed independent\footnote{For all  distinct elements $z_1, \dots, z_k \in \Lambda$ and all $n_1, \dots, n_k \in \Z$, if $z_1^{n_1} \cdots z_k^{n_k} = 1$ then $n_1 = \cdots = n_k = 0$.} set $\Lambda \subset \mathbf T$ and a Borel map $2^\N \ni x \mapsto \mu_x \in \Prob(\mathbf T)$ such that: 
\begin{itemize}
\item For all $x \in 2^\N$, $\mu_x$ is a symmetric Rajchman measure such that $\supp(\mu_x) \subset \Lambda \cup \overline \Lambda$. 
\item For all $x, y \in 2^\N$ such that $x \neq y$, we have $\supp(\mu_x) \cap \supp(\mu_y) = \emptyset$.
\end{itemize}

Since $\mu_x$ is a Rajchman measure, $\LL(\Z) \subset \Gamma(H_\R^{\mu_x})\dpr \rtimes_{\pi^{\mu_x}} \Z$ is maximal amenable by the theorem. Put 
$$A = \LL(\Z) \mbox{ and } \vphantom{}_A\mathcal H(x)_A = \vphantom{}_A\LL^2((\Gamma(H_\R^{\mu_x})\dpr \rtimes_{\pi^{\mu_x}} \Z) \ominus \LL(\Z))_A.$$ 
We have $\supp(\mu_x^{\ast n}) \subset (\Lambda \cup \overline \Lambda)^n$ for all $x \in 2^\N$ and all $n \geq 1$. Since the measures $(\mu_x)_{x \in 2^\N}$ are atomless with pairwise disjoint supports and $\Lambda$ is a closed independent set, we obtain that the measures $\mu_x^{\ast n}$ for $x \in 2^\N$ and $n \geq 1$ are pairwise singular by \cite[Theorem 5.3.2]{rudin}. In the language of spectral theory, this shows that the maximal spectral type of the unitary representation $\bigoplus_{n \geq 1} (\pi^{\mu_x})^{\otimes n}$ is equal to $\mu_x^\infty$ and that the measures $(\mu_x^\infty)_{x \in 2^\N}$ are moreover pairwise singular. Since $\Lambda$ is a closed independent set, the subgroup $H(\Lambda) \subset \mathbf T$ generated by $\Lambda$ has Haar measure zero by \cite[Theorem 5.3.6]{rudin}. In particular, the measures $\mu_x^\infty$ are singular with respect to the Haar measure for all $x \in 2^\N$.

Write $\mathbf \Psi : \mathbf T^2 \to \mathbf T^2$ for the group homomorphism $\mathbf \Psi(z_1, z_2) = (z_1 \overline z_2, z_2)$. Then the map 
$$\eta : 2^\N \to \Prob(\mathbf T^2) : x \mapsto \eta_x = \mathbf \Psi_\ast(\mu_x^\infty \times \Haar)$$ 
is Borel and the measure class of the $A$-$A$-bimodule $\mathcal H(x)$ is the class of $\eta_x$. Since the measures $(\mu_x^\infty)_{x \in 2^\N}$ are pairwise singular and all singular with respect to the Haar measure on $\mathbf T$, the measures $(\eta_x)_{x \in 2^\N}$ are pairwise singular and all singular with respect to the Haar measure on $\mathbf T^2$. Therefore the $A$-$A$-bimodules $\mathcal H(x)$ are pairwise non-isomorphic and all disjoint from the coarse $A$-$A$-bimodule.  This finishes the proof.
\end{proof}

\bibliographystyle{plain}

\begin{thebibliography}{AA}


\bibitem{brothier-these} {\sc A. Brothier}, {\it The cup subalgebra of a ${\rm II_1}$ factor given by a subfactor planar algebra is maximal amenable.} {\tt arXiv:1210.8091}


\bibitem{BO} {\sc N.P. Brown, N. Ozawa}, {\it C$^*$-algebras and finite-dimensional approximations.} Graduate Studies in Mathematics, {\bf 88}. American Mathematical Society, Providence, RI, 2008. xvi+509 pp.

\bibitem{CFRW} {\sc J. Cameron, J. Fang, M. Ravichandran, S. White}, {\it The radial masa in a free group factor is maximal injective.} J. Lond. Math. Soc. (2) {\bf 82} (2010), 787--809.

\bibitem{connes76} {\sc A. Connes}, {\it Classification of injective factors.} Ann. of Math. {\bf 104} (1976), 73--115.


\bibitem{CFS} {\sc I.P. Cornfeld, S.V. Fomin, Ya.G. Sinai}, {\it Ergodic theory.} 
Grundlehren der Mathematischen Wissenschaften {\bf 245}. Springer-Verlag, New York, 1982. x+486 pp.


\bibitem{dykema94} {\sc K. Dykema}, {\it Interpolated free group factors.} Pacific J. Math. {\bf 163} (1994), 123--135.


\bibitem{dykema-mukherjee} {\sc K. Dykema, K. Mukherjee}, {\it Measure-multiplicity of the Laplacian masa}. Glasgow Math. J. {\bf 55} (2013), 285--292.


\bibitem{houdayer-ricard} {\sc C. Houdayer, \'E. Ricard}, {\it Approximation properties and absence of Cartan subalgebra for free Araki-Woods factors.} Adv. Math. {\bf 228} (2011), 764--802.


\bibitem{houdayer-shlyakhtenko} {\sc C. Houdayer, D. Shlyakhtenko}, {\it Strongly solid ${\rm II_1}$ factors with an exotic MASA.} Int. Math. Res. Not. IMRN {\bf 2011}, no. 6, 1352--1380. 


\bibitem{kadison} {\sc R.V. Kadison}, {\it Diagonalizing matrices.} Amer. J. Math. {\bf 106} (1984),  1451--1468.

\bibitem{kahane-salem} {\sc J.-P. Kahane, R. Salem}, {\it Ensembles parfaits et s\'eries trigonom\'etriques.} Second edition. With notes by Kahane, Thomas W. K\"orner, Russell Lyons and Stephen William Drury. Hermann, Paris, 1994. 245 pp. 


\bibitem{kechris} {\sc A.S. Kechris}, {\it Global aspects of ergodic group actions.} Mathematical Surveys and Monographs, {\bf 160}. American Mathematical Society, Providence, RI, 2010. xii+237 pp.

\bibitem{kechris-louveau} {\sc A.S. Kechris, A. Louveau}, {\it Descriptive set theory and the structure of sets of uniqueness.} London Mathematical Society Lecture Note Series, {\bf 128}. Cambridge University Press, Cambridge, 1987. viii+367 pp.

\bibitem{mvn} {\sc F. Murray, J. von Neumann}, {\it Rings of operators.} ${\rm IV}$.  Ann. of Math. {\bf 44} (1943), 716--808.

\bibitem{neshveyev-stormer} {\sc S. Neshveyev, E. St\o rmer}, {\it Ergodic theory and maximal abelian subalgebras of the hyperfinite factor.} J. Funct. Anal. {\bf 195} (2002), 239--261.


\bibitem{ozawa} {\sc N. Ozawa}, {\it Solid von Neumann algebras.} Acta Math. {\bf 192} (2004), 111--117. 


\bibitem{ozawa-popa} {\sc N. Ozawa, S. Popa}, {\it On a class of $\rm{II}_1$ factors with at most one Cartan subalgebra.} Ann. of Math. {\bf 172} (2010), 713--749. 


\bibitem{popa-amenable} {\sc S. Popa}, {\it Maximal injective subalgebras in factors associated with free groups.} Adv. Math. {\bf 50} (1983), 27--48.

\bibitem{popa-malleable1} {\sc S. Popa}, {\it Strong rigidity of ${\rm II_1}$ factors arising from malleable actions of w-rigid groups ${\rm I}$.} Invent. Math. {\bf 165} (2006), 369--408.

\bibitem{radulescu-singular} {\sc F. R\u{a}dulescu}, {\it Singularity of the radial subalgebra of $\LL(\F_N)$ and the Puk\'anszky invariant.} Pacific J. Math. {\bf 151} (1991), 297--306. 

\bibitem{radulescu94} {\sc F. R\u{a}dulescu}, { \it Random matrices, amalgamated free products and subfactors of the von Neumann algebra of a free group, of noninteger index.} Invent. Math. {\bf 115} (1994), 347--389.

\bibitem{rudin} {\sc W. Rudin}, {\it Fourier analysis on groups.} Interscience Tracts in Pure and Applied Mathematics, No. {\bf 12} Interscience Publishers (a division of John Wiley and Sons), New York-London 1962 ix+285 pp.

\bibitem{voiculescu85} {\sc D.-V. Voiculescu}, {\it Symmetries of some reduced free product C$^*$-algebras.} Operator algebras and Their Connections with Topology and Ergodic Theory, Lecture Notes in Mathematics {\bf 1132}. Springer-Verlag, (1985), 556--588. 



\bibitem{voiculescu92} {\sc D.-V. Voiculescu, K.J. Dykema, A. Nica}, {\it Free random variables.} CRM Monograph Series {\bf 1}. American Mathematical Society, Providence, RI, 1992. 

\bibitem{voiculescu96} {\sc D.-V. Voiculescu}, {\it The analogues of entropy and of Fisher's information measure in free probability theory. {\rm III}. The absence of Cartan subalgebras.} Geom. Funct. Anal. {\bf 6} (1996), 172--199.

\end{thebibliography}

\end{document}